\title{Skew-adjoint maps and quadratic Lie algebras}
\author{Pilar Benito}
\email{pilar.benito@unirioja.es}
\author{Javier R\'andez-Ib\'a\~nez}
\email{jarandez@unirioja.es}
\author{Jorge Rold\'an-L\'opez}
\email{jorge.roldanl@unirioja.es}
\address{Departamento de Matem\'aticas y Computaci\'on, Universidad de La Rioja, Logro\~no, La Rioja, Spain}
\date{\today}
\keywords{skewadjoint operators, invariant forms, quadratic algebras, Lie algebras, double extension, lorentzian algebras}
\subjclass[2020]{17A45, 17B05, 17B40, 15A21} 
\theoremstyle{plain}
	\newtheorem{lemma}{Lemma}[section]
	\newtheorem{proposition}[lemma]{Proposition}
	\newtheorem{theorem}[lemma]{Theorem}
	\newtheorem{corollary}[lemma]{Corollary}
\theoremstyle{definition}
	\newtheorem{definition}{Definition}[section]
\theoremstyle{remark}
	\newtheorem{remark}[lemma]{Remark}
	\newtheorem{example}{Example}[section]
\newcommand{\mfd}{\mathfrak{d}}
\newcommand{\mfh}{\mathfrak{h}}
\newcommand{\mfn}{\mathfrak{n}}
\newcommand{\mfsl}{\mathfrak{sl}}
\newcommand{\mfsu}{\mathfrak{su}}
\newcommand{\mbC}{\mathbb{C}}
\newcommand{\mbK}{\mathbb{K}}
\newcommand{\mbR}{\mathbb{R}}
\DeclareMathOperator{\ad}{ad}
\DeclareMathOperator{\id}{Id} 
\DeclareMathOperator{\spa}{span} 
\DeclareMathOperator{\im}{Im} 
\DeclareMathOperator{\inner}{Inner} 
\DeclareMathOperator{\Tr}{Tr}
\DeclareMathOperator{\der}{Der}
\begin{document}
\maketitle
\begin{abstract}
	The procedure of double extension of vector spaces endowed with non-degenerate bilinear forms allows us to introduce the class of generalized $\mbK$-oscillator algebras over any arbitrary field $\mbK$. Starting from basic structural properties of such algebras and the canonical forms of skew-adjoint endomorphisms, we will proceed to classify the subclass of quadratic nilpotent algebras and characterize those algebras in the class with quadratic dimension 2. This will enable us to recover the well-known classification of real oscillator algebras, also known as Lorentzian algebras, given by Alberto Medina in 1985.
\end{abstract}


\section{Introduction}

In linear algebra, considerable work has been done on isometric, self-adjoint, and skew-adjoint endomorphisms with respect to a bilinear form. Canonical matrices for operators of this type on a complex inner product vector space have been provided in \cite{Horn_Merino_1999}. Similar results for more general fields can be found in \cite{Sergeichuk_1988} and, more recently, in \cite{Caalim_Futorny_Sergeichuk_Tanaka_2020}. In this paper, we will focus on skew-adjoint endomorphisms and their significance as a fundamental tool in constructing quadratic Lie algebras.

A \emph{Lie algebra} $L$ is a vector space over a field equipped with a bilinear product $[x,y]$ that satisfies $[x,x]=0$ and the \emph{Jacobi identity}:
\begin{equation}\label{eq:jacobi}
	J(x,y,z)=[[x,y],z]+[[y,z],x]+[[z,x],y]=0.
\end{equation}
The algebra $L$ is said to be \emph{quadratic}if it is endowed with a non-degenerate symmetric bilinear form, $\varphi$, which is \emph{invariant} with respect to the Lie bracket:
\begin{equation}\label{eq:invariant}
	\varphi([x,y],z)+\varphi(y,[x,z])=0.
\end{equation}
If $(L,\varphi)$ is quadratic, the invariance of the equation\eqref{eq:invariant} is equivalent to the left multiplication operators, called \emph{adjoint} or \emph{inner derivations}   denoted by $\ad x$, being $\varphi$-skew-adjoint endomorphisms. In fact, the set of inner derivations of $L$, $\inner (L)$, is an ideal of the whole algebra of derivations of $L$, $\der (L)$. And the set of $\varphi$-skew-adjoint derivations, $\der_\varphi (L, \varphi)=\{d\in \der (L): \varphi(d(x),y)+\varphi(x,d(y))=0\}$, is a Lie subalgebra of $\der L$ that contains $\inner (L)$.

Semisimple Lie algebras under their Killing-Cartan form, which is defined as $\kappa(x,y)=\Tr(\ad x\ad y)$, are nice examples of quadratic algebras. On the opposite structural side, we find abelian Lie algebras; all of them are quadratic by using any non-degenerate symmetric form. The orthogonal sum (as ideals) of semisimples and abelian algebras allows us to assert that reductive Lie algebras are also quadratic. According to \cite{Medina_Revoy_1985}, any non-simple, non-abelian, and indecomposable quadratic Lie algebra (i.e. the algebra does not break as an orthogonal sum of two regular ideals) is a double extension either by a one-dimensional or by a simple Lie algebra. The main tool enabling this classical procedure is the existence of skew-adjoint derivations. We point out that the class of quadratic Lie algebras is quite large and contains reductive Lie algebras and also infinitely many non-semisimple examples. Most of the examples, structures, and constructions on quadratic algebras have been set over fields of characteristic zero (see \cite{Ovando_2016} for a survey-guide). In positive characteristic, they have not been as extensively studied.

Over the reals, the double extension of any Euclidean vector space by a skew-adjoint automorphism yields to the class of real oscillator algebras. They are quadratic and solvable Lie algebras of dimension $2n+2$ and a bilinear invariant form of Lorentzian type (inner product with metric signature $(2n+1,1)$). The real oscillator class was firstly introduced and easily described, thanks to the Spectral Theorem on real skew-adjoint operators, by ALberto Medina in \cite[Section 4]{Medina_1985}. The name oscillator comes from quantum mechanics because they describe a system of a harmonic oscillator $n$-dimensional Euclidean space. At the same time, J. Hilgert and K. H. Hofmann arrive at oscillator algebras in their characterization of Lorentzian cones in real Lie algebras \cite{Hilgert_Hofmann_1985}. In \cite[Definition II.3.6]{Hilgert_Hofmann_Lawson_1989}, the authors term them as (solvable) Lorentzian algebras. For $n\geq 2$, the Levi subalgebra of the algebra of skew-derivations of a $(2n+2)$-dimensional oscillator algebra is the special unitary real Lie algebra $\mfsu_n(\mbR)$ (see \cite[Theorem 3.2]{Benito_RoldanLopez_2023b}). So, oscillator algebras can be doubly extended to a countable series of non-semisimple and non-solvable quadratic Lie algebras. Moreover, the study of other non-associative structures on oscillator algebras provides information on connections and metrics on oscillator Lie groups \cite[Section 5]{Albuquerque_Barreiro_Benayadi_Boucetta_SanchezDelgado_2021}.

Throughout this paper, we will extend the notion of real oscillator algebras to any arbitrary field $\mbK$ of characteristic different from 2. Under the name of generalized $\mbK$-oscillator algebras, we encode the double extensions of any abelian quadratic Lie algebra by any skew-adjoint endomorphisms. In the particular case $\mbK=\mbR$, extensions through skew-adjoint automorphisms allow us to recover the class of real oscillator algebras \cite[Lemme 4.2]{Medina_1985}.

The paper is organized as follows. In Section~\ref{s:skew-maps}, we assemble some basic properties, orthogonal decompositions, and canonical forms of skew-adjoint endomorphisms. Definition~\ref{def:one-by-abelian-extension} in Section~\ref{s:generalOscillator} establishes the concept of generalized oscillator algebras over arbitrary fields of characteristic not $2$, and Lemma~\ref{lem:general-oscillator-properties} reviews some of structural properties of this class of quadratic algebras. In Section~\ref{s:generalOscillator}, we also classify those algebras in the class that are nilpotent in Theorem~\ref{thm:oscillator-nilpotent}, and provide a characterization of those with quadratic dimension two. In Section~\ref{s:isolorentz}, we prove that indecomposable quadratic algebras with Witt index 1 are simple or solvable. The solvable ones are just the subclass of generalized oscillator algebras which are constructed as a double extension of skew-adjoint automorphisms. The proof of the assertion is based on the concept of \emph{isomaximal ideal} introduced in \cite{Kath_Olbrich_2004}. This result enables us to recover the well-established classification of real oscillator algebras given by Medina in 1985.


\section{Skew-maps on orthogonal subspaces}\label{s:skew-maps}

Let $V$ be a $\mbK$-vector space and $f\colon V\to V$ a $\mbK$-endomorphism. From now on, we denote $m_f(x)\in \mbK[x]$ as the (monic) minimal polynomial of $f$, that is $m_f(f)=0$ and any other polynomial $q(x)$ with $q(f)=0$ is a multiple of $m_f(x)$. The factorization of $m_f(x)$ into distinct irreducible polynomials, $m_f(x)=\pi_1^{k_1}(x)\cdots\pi_r^{k_r}(x)$, with $\pi_i(x)$ monic and $k_i\geq 1$, induces a direct sum vector decomposition of $V$, referred to as \emph{primary decomposition}:
\begin{equation}\label{eq:primary-decomposition}
	V=V_{\pi_1}\oplus \cdots\oplus V_{\pi_r}=V_0 \oplus\bigl( \underset{\pi_i\neq x}{\oplus}V_{\pi_i}\bigr),
\end{equation}
where $V_{\pi_i}=\{v\in V\mid \pi_i^{k_i}(f)(v)=0\}$. Each primary component is an $f$-invariant subspace. In the particular case $\pi_1(x)=x-\lambda$, the scalar $\lambda$ is an \emph{eigenvalue} of $f$, and $V_{x-\lambda}$ is usually denoted as $V_{\lambda}$. In this case, the subspace is called a \emph{generalized $\lambda$-eigenspace}. Thus, $V_0$ is the generalized $0$-eigenspace, and $V_0$ will be null if and only if $f$ is a bijective map. Let $J_n(\lambda)$ denote the Jordan $n$-by-$n$ canonical block and $C(\pi(x))$ the companion matrix of a given monic polynomial $\pi(x)=x^n-a_{n-1}x^{n-1}-\dots-a_1x-a_0$. Therefore,

\begin{align}\label{eq:jordan-canonical-blocks}
	J_n(\lambda):=\begin{pmatrix}
		              \lambda & 1       &        & 0       \\
		              0       & \lambda & \ddots & 0       \\
		                      &         & \ddots & 1       \\
		              0       & 0       & 0      & \lambda
	              \end{pmatrix},
	 &
	\, C(\pi(x)):=\begin{pmatrix}
		              0 &        & 0 & a_0     \\
		              1 & \ddots &   & a_1     \\
		                & \ddots & 0 & \vdots  \\
		              0 &        & 1 & a_{n-1}
	              \end{pmatrix}
\end{align}

Now, let $\varphi$ be a symmetric bilinear form, and assume that $f$ is $\varphi$-skew-adjoint (henceforth, we will use $\varphi$-skew to abbreviate this term), which means, $\varphi(f(x), y)=-\varphi(x, f(y))$.  Then, for any $s\geq 0$, we have:
\begin{equation}\label{eq:power-invarianza}
	\varphi(f^s(x),y)=(-1)^s\varphi(x,f^s(y))=\varphi(x,(-f)^s(y)).
\end{equation}
This implies that,
\begin{equation}\label{eq:skew-polinomial}
	\varphi(q(f)(x),y)=\varphi(x,q(-f)(y)) \text{\ for any\  } q(x)\in \mbK[x].
\end{equation}

From now on, the pair $(V,\varphi)$, where $\varphi$ is a symmetric bilinear form, will be called an orthogonal $\mbK$-vector space.

\begin{proposition}\label{prop:primary-decomposition-facts}
	Let $(V, \varphi)$ be an orthogonal  $\mbK$-vector space, and $f$ a $\varphi$-skew $\mbK$-endomorphism of $V$ with minimal polynomial $m_f(x)=\pi_1^{k_1}(x)\cdots \pi_r^{k_r}(x)$, and primary decomposition $V=\bigoplus_{i=1}^rV_{\pi_i}$. The orthogonal subspaces $V_{\pi_i}^\perp$ are $f$-invariant, and for any $\pi_i(x)$, we have two possibilities:
	\begin{enumerate}[\quad a)]
		\item $\pi_i(-x)\neq \pm \pi_j(x)$ for all $1\leq j\leq r$, in this case, $V_{\pi_i}\subset
			      V^\perp$ and $\varphi$ is a degenerate form.
		\item There is a unique  $j_i\in \{1, \dots, r\}$ such that
		      $\pi_i(-x)=(-1)^{\deg\pi_i}\pi_{j_i}(x)$ and then  $V_{\pi_i}^\perp=V_{\pi_{j_i}}\cap V_{\pi_i}^\perp\oplus\bigl(\underset{k\neq j_i}{\oplus}V_{\pi_k}\bigr)$. Moreover,

		      \begin{itemize}
			      \item for any $1\leq i\leq r$, $V_{\pi_{j_i}}\cap V_{\pi_i}^\perp=V_{\pi_{j_i}}\cap V^\perp$ and
			      \item if $i\neq j_i$, the primary components $V_{\pi_i}, V_{\pi_{j_i}}$ are totally isotropic.
		      \end{itemize}
	\end{enumerate}
\end{proposition}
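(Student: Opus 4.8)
The plan is to reduce everything to a single orthogonality criterion between primary components, extracted from the polynomial identity \eqref{eq:skew-polinomial}. First I would introduce the involution $\pi\mapsto\pi^\ast$ on monic polynomials given by $\pi^\ast(x)=(-1)^{\deg\pi}\pi(-x)$, which sends monic irreducibles to monic irreducibles and satisfies $(\pi^\ast)^\ast=\pi$. The condition in b) then reads simply $\pi_{j_i}=\pi_i^\ast$, while a) is the case $\pi_i^\ast\notin\{\pi_1,\dots,\pi_r\}$; the two cases are manifestly exhaustive and mutually exclusive, and $j_i$ is unique because the $\pi_k$ are distinct. The arithmetic remark that $\pi_i(-x)=\pm\pi_j(x)$ is equivalent to $\pi_i^\ast=\pi_j$ uses $\operatorname{char}\mbK\neq 2$ to rule out the sign clash between the monic leading coefficients.

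The central computation is the following. Since each $V_{\pi_k}$ is $f$-invariant, $f|_{V_{\pi_j}}$ has minimal polynomial $\pi_j^{k_j}$, and hence $-f|_{V_{\pi_j}}$ has minimal polynomial $(\pi_j^\ast)^{k_j}$. Therefore $\pi_i^{k_i}(-f)$ restricts to an invertible (hence surjective) operator on $V_{\pi_j}$ precisely when $\pi_i\neq\pi_j^\ast$, i.e. when $\pi_j\neq\pi_i^\ast$. Taking $v\in V_{\pi_i}$, $w\in V_{\pi_j}$ and applying \eqref{eq:skew-polinomial} with $q=\pi_i^{k_i}$ gives $0=\varphi(\pi_i^{k_i}(f)(v),w)=\varphi(v,\pi_i^{k_i}(-f)(w))$; when the restriction is surjective this forces $\varphi(V_{\pi_i},V_{\pi_j})=0$. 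This yields the key fact: $V_{\pi_i}\perp V_{\pi_j}$ whenever $\pi_j\neq\pi_i^\ast$. The $f$-invariance of $V_{\pi_i}^\perp$ is then immediate from skew-adjointness: for $w\in V_{\pi_i}^\perp$ and $v\in V_{\pi_i}$, one has $\varphi(f(w),v)=-\varphi(w,f(v))=0$ because $f(v)\in V_{\pi_i}$.

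With the key fact in hand, a) is immediate: if $\pi_i^\ast$ is not among the factors, then $\pi_j\neq\pi_i^\ast$ for every $j$, so $V_{\pi_i}$ is orthogonal to every $V_{\pi_j}$, hence $V_{\pi_i}\subseteq V^\perp$; as $V_{\pi_i}\neq 0$, the form is degenerate. For b), the key fact gives $V_{\pi_k}\subseteq V_{\pi_i}^\perp$ for all $k\neq j_i$. To split off the remaining piece I would use that the primary projections are polynomials in $f$, so the $f$-invariant subspace $V_{\pi_i}^\perp$ decomposes as $\bigoplus_k\bigl(V_{\pi_i}^\perp\cap V_{\pi_k}\bigr)$; combining this with $V_{\pi_i}^\perp\cap V_{\pi_k}=V_{\pi_k}$ for $k\neq j_i$ yields the stated decomposition. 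For the first bullet I would prove $V_{\pi_{j_i}}\cap V_{\pi_i}^\perp=V_{\pi_{j_i}}\cap V^\perp$: the inclusion $\supseteq$ is trivial, and for $\subseteq$ note that $\pi_{j_i}^\ast=\pi_i$, so the key fact gives $V_{\pi_{j_i}}\perp V_{\pi_k}$ for every $k\neq i$; a vector in $V_{\pi_{j_i}}\cap V_{\pi_i}^\perp$ is then orthogonal to all of $V$. The second bullet follows by taking $j=i$ in the key fact: when $i\neq j_i$ we have $\pi_i\neq\pi_i^\ast$, whence $V_{\pi_i}\perp V_{\pi_i}$, and symmetrically $V_{\pi_{j_i}}\perp V_{\pi_{j_i}}$ since $\pi_{j_i}\neq\pi_{j_i}^\ast=\pi_i$.

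The only genuinely delicate step is the minimal-polynomial bookkeeping underlying the key fact: verifying that $-f|_{V_{\pi_j}}$ has minimal polynomial $(\pi_j^\ast)^{k_j}$ and translating the invertibility of $\pi_i^{k_i}(-f)$ on $V_{\pi_j}$ into the coprimality condition $\pi_i\neq\pi_j^\ast$. Everything else is either a direct consequence of \eqref{eq:skew-polinomial} or the standard fact that an $f$-invariant subspace respects the primary decomposition, so I expect no further obstacles.
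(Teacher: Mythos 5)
Your proof is correct and follows essentially the same route as the paper's: pairwise orthogonality of primary components derived from \eqref{eq:skew-polinomial} together with coprimality (your invertibility of $\pi_i^{k_i}(-f)$ on $V_{\pi_j}$, via the minimal polynomial $(\pi_j^\ast)^{k_j}$ of $-f|_{V_{\pi_j}}$, is exactly the paper's Bezout argument in disguise), then the splitting of $V_{\pi_i}^\perp$ from its $f$-invariance and the same derivations of the two bullet points. The only difference is cosmetic: your involution $\pi\mapsto\pi^\ast$ treats the factor $\pi(x)=x$ uniformly, whereas the paper first disposes of the non-injective case by a separate surjectivity argument before assuming $f$ is one-to-one.
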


\begin{proof}
	From equation~\eqref{eq:power-invarianza}, it is easily checked that $V_{\pi_i}^\perp$ is invariant under $f$. We point out that $ W=\underset{\pi_i\neq x}{\oplus}V_{\pi_i}\subseteq \im f$. Even more, $f^s\mid_{V_{\pi_i}}$ is bijective for any direct summand of $W$ with $s\geq 1$. Suppose $W\neq V$, which is equivalent to $f$ not being one-to-one. Then, the polynomial $x$ appears in the factorization of $m_f(x)$. Reordering, if necessary, we assume $x=\pi_1(x)=-\pi_1(-x)$. So, $V_{\pi_1}=V_x=V_0$. For any  $w\in V_{\pi_i}\subseteq W$, there exists a vector $v\in V_{\pi_i}$ such that $w=f^{k_1}(v)$. We have:
	\begin{equation*}
		\varphi(V_0, w)=\varphi(V_0, f^{k_1}(v))=(-1)^{k_1}\varphi(f^{k_1}(V_0),w)=0.
	\end{equation*}
	Thus, $V_0\perp W$, and any assertion regarding the primary component $V_{\pi_1}$ follows. This is a particular case of item b). Without loss of generality, we can assume $f$ is one-to-one. Since $\pi_i(x)$ is irreducible, so is $\pi_i(-x)$, thus either $\gcd(\pi_i(x),\pi_j(-x))=1$ or $\gcd(\pi_i(x),\pi_j(-x))=\pi_i(x)$. The second case happens if and only if $\pi_j(-x)=\pm\pi_i(x)$. Firstly, assume $\pi_i(-x)\neq \pm\pi_j(x)$ for any $1\leq j\leq r$. Then, $\gcd(\pi_i^{k_i}(-x), \pi_j^{k_j}(x))=1$, so we can use Bezout's identity: there exist $a(x)$, $b(x)\in\mbK[x]$ with $a(x)\pi_i^{k_i}(-x)+b(x)\pi_j^{k_j}(x)=1$. Take $v\in V_{\pi_i}$, so $\pi_i^{k_i}(v)=0$. Using Bezout's identity, for any vector $w$ in any primary component $V_{\pi_j}$, we have $w=a(f)\pi_i^{k_i}(-f)(w)$ and therefore,
	\[
		0=\varphi(a(-f)(\pi_i^{k_i}(f)(v)),w)=\varphi(v,a(f)\pi_i^{k_i}(-f)(w))=\varphi(v,w).
	\]
	This implies $0\neq V_{\pi_i}\subset V^\perp$, and a) follows. Otherwise, the uniqueness of the decomposition into irreducibles of the minimal polynomial ensures that there exists a unique $1\leq j_i\leq r$ such that $\pi_i(-x)=(-1)^{\deg\pi_i}\pi_{j_i}(x)$. This implies that $\pi_i(-x)$ and $\pi_k(x)$ are coprime for any $k\neq j_i$. Using  the Bezout's identity as in the previous reasoning, $V_{\pi_i}\perp V_{\pi_k}$, and the direct sum decomposition of $V_{\pi_i}^\perp$ follows from the $f$-invariance of  $V_{\pi_i}^\perp$. This decomposition also shows that $V_{\pi_i}\subseteq V_{\pi_i}^\perp$ when $i\neq j_i$. Finally, we will prove the equality $V_{\pi_{j_i}}\cap V_{\pi_i}^\perp=V_{\pi_{j_i}}\cap V^\perp$. From $V^\perp \subseteq V_{\pi_i}^\perp$ we have that $V_{\pi_{j_i}}\cap V^\perp\subseteq V_{\pi_{j_i}}\cap V_{\pi_i}^\perp$. The equality will follow by proving that $V_{\pi_{j_i}}\cap V_{\pi_i}^\perp\subseteq V^\perp$. Let $v\in V_{\pi_{j_i}}\cap V_{\pi_i}^\perp$, and note that $\varphi(v,w)=0$ if $w\in V_{\pi_i}$. If  $w\in V_{\pi_k}$ and $k\neq i$, since $V_{\pi_k}\perp V_{\pi_{j_i}}$, we also have $\varphi(v,w)=0$. Therefore, $v\in V^\perp$.

\end{proof}

\begin{remark}\label{rmk:polinomio-sim}
	For any polynomial $p(x)=x^n+a_{n-1}x^{n-1}+\dots+a_1x+a_0$ such that $a_0\neq 0$, $q(x)=x^n-a_{n-1}x^{n-1}+\dots+(-1)^{n-1}a_1x+(-1)^na_0$ is the unique monic polynomial fulfilling $p(-x)=(-1)^{\deg p}q(x)$. The set of roots of $q(x)$ is just $\{-\lambda: p(\lambda)=0\}$. In particular, $q(x)=p(x)$ if and only if the monomials of $p(x)$ have even degree, and the roots of $p(x)$ are of the form $\pm \lambda_1,\dots, \pm\lambda_n$.
\end{remark}

From the previous Proposition~\ref{prop:primary-decomposition-facts}, we arrive at the following general orthogonal decomposition of $(V, \varphi)$ through a $\varphi$-skew map.

\begin{corollary}\label{cor:descomposicion-ortogonal}
	Let $(V,\varphi)$ be an orthogonal $\mbK$-vector space over a field of characteristic not $2$, and $f\colon V\to V$ be a $\varphi$-skew linear map. Then, up to permutation, the factorization of $m_f(x)=x^\alpha\pi_1^{k_1}(x)\cdots \pi_r^{k_r}(x)$ into irreducible monic polynomials splits as $m_f(x)=x^\alpha p(x)q(x)s(x)n(x)$ where $\alpha\geq 0$, and if the degree of some of the $p,q,s,n$ factors is $\geq 1$:
	\begin{enumerate}[\quad a)]
		\item $p(x)=\pi_1^{k_1}(x)\cdots \pi_t^{k_t}(x)$ and $\pi_i(-x)=\pm \pi_i(x)$,
		\item $q(x) =\pi_{t+1}^{k_{t+1}}(x)\cdots \pi_{t+l}^{k_{t+l}}(x)$ and $\pi_j(-x)=\pm \pi_{l+j}(x)$ and,
		\item $s(x)=\pi_{t+l+1}^{k_{t+l+1}}(x)\cdots \pi_{t+2l}^{k_{t+2l}}(x)$ and $ \pi_{l+j}(-x)=\pm\pi_j(x)$,
		\item $n(x)=\pi_{t+2l+1}^{k_{t+2l+1}}(x)\cdots \pi_r^{k_r}(x)$ and $\pi_j(-x)\notin \pm\{ \pi_1, \dots,\pi_r\}$.
	\end{enumerate}
	This yields to the orthogonal sum, $V=V_0\perp V^1\perp V^2\perp V^3$, where $V^1= \underset{\pi_i\mid p(x)}{\oplus}V_{\pi_i}$, $V^2= \underset{\pi_j\mid q(x)}{\oplus}\bigl(V_{\pi_j}\oplus V_{\pi_{j+l}}\bigr)$ with $V_{\pi_j}$ and $V_{\pi_{j+l}}$ totally isotropic subspaces, and $V^3= \underset{\pi_i\mid n(x)}{\oplus}V_{\pi_i}\subseteq V^\perp$.
\end{corollary}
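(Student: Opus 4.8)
The plan is to treat this statement as an organizational consequence of Proposition~\ref{prop:primary-decomposition-facts} together with Remark~\ref{rmk:polinomio-sim}. First I would split off the factor $x^\alpha$, whose primary component is $V_0$: the opening argument of the proof of Proposition~\ref{prop:primary-decomposition-facts} already shows $V_0\perp W$ for $W=\underset{\pi_i\neq x}{\oplus}V_{\pi_i}$, so $V_0$ is orthogonal to every other summand and can be set aside. For the remaining irreducible factors $\pi_i\neq x$, Remark~\ref{rmk:polinomio-sim} attaches to each a unique monic $\pi_i^{*}$ with $\pi_i(-x)=(-1)^{\deg\pi_i}\pi_i^{*}(x)$; since $x\mapsto -x$ is an automorphism of $\mbK[x]$, $\pi_i^{*}$ is again irreducible, and a direct substitution shows $(\pi_i^{*})^{*}=\pi_i$, so $\pi\mapsto\pi^{*}$ is an involution on the set of monic irreducible factors of $m_f(x)$.

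Next I would partition the factors $\pi_1,\dots,\pi_r$ (other than $x$) according to this involution and the dichotomy of Proposition~\ref{prop:primary-decomposition-facts}. A factor is of type b) precisely when $\pi_i^{*}$ again occurs among $\pi_1,\dots,\pi_r$, equivalently when there is a unique $j_i$ with $\pi_i(-x)=(-1)^{\deg\pi_i}\pi_{j_i}(x)$; otherwise it is of type a). Within type b) I distinguish the fixed points of the involution, $j_i=i$ (self-dual, $\pi_i(-x)=\pm\pi_i(x)$), which I collect into $p(x)$ with $V^1=\underset{\pi_i\mid p}{\oplus}V_{\pi_i}$, from the two-element orbits $\{\pi_j,\pi_{j+l}\}$ with $j_i\neq i$, which I split by placing one representative per orbit into $q(x)$ and its involutive partner into $s(x)$, giving $V^2=\underset{\pi_j\mid q}{\oplus}(V_{\pi_j}\oplus V_{\pi_{j+l}})$. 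The type a) factors, for which $\pi_i^{*}\notin\{\pi_1,\dots,\pi_r\}$, are collected into $n(x)$ with $V^3=\underset{\pi_i\mid n}{\oplus}V_{\pi_i}$. Relabelling the indices as in the statement then makes this exhaust $m_f(x)=x^\alpha p(x)q(x)s(x)n(x)$ with no overlaps.

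Then I would read off all orthogonality relations from Proposition~\ref{prop:primary-decomposition-facts}. For type a) factors, item a) gives $V_{\pi_i}\subseteq V^\perp$, hence $V^3\subseteq V^\perp$, so $V^3$ is orthogonal to every other summand. For type b) factors, item b) yields $V_{\pi_i}\perp V_{\pi_k}$ whenever $k\neq j_i$. Applied to a self-dual $\pi_i$ ($j_i=i$) this shows its primary component is orthogonal to all the others, which gives $V^1\perp V^2$, $V^1\perp V^3$, and mutual orthogonality of the distinct summands inside $V^1$. Applied to a two-element orbit $\{\pi_j,\pi_{j+l}\}$ it shows $V_{\pi_j}$ and $V_{\pi_{j+l}}$ are each orthogonal to everything except one another, so each pair $V_{\pi_j}\oplus V_{\pi_{j+l}}$ is orthogonal to the remaining summands; moreover, since the orbit has two distinct elements, the relation at $k=j$ reads $V_{\pi_j}\perp V_{\pi_j}$, i.e. $V_{\pi_j}$ (and likewise $V_{\pi_{j+l}}$) is totally isotropic. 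Together with $V_0\perp W$, this assembles the claimed orthogonal sum $V=V_0\perp V^1\perp V^2\perp V^3$.

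The substantive work is carried entirely by Proposition~\ref{prop:primary-decomposition-facts}; the corollary is its organizational repackaging, so I expect no genuine obstacle. The only points needing care are confirming that $\pi\mapsto\pi^{*}$ is a well-defined involution preserving irreducibility, so that the factor set splits cleanly into fixed points and disjoint two-element orbits (underpinning the $q$/$s$ labelling and the pairing $\pi_j\leftrightarrow\pi_{j+l}$), and noting that the hypothesis $\operatorname{char}\mbK\neq 2$ is what makes $x\mapsto -x$ a nontrivial automorphism, so that the dichotomy ``self-dual versus paired'' is non-vacuous. Checking that $p,q,s,n$ are pairwise coprime and jointly recover $x^\alpha\pi_1^{k_1}(x)\cdots\pi_r^{k_r}(x)$ finishes the verification.
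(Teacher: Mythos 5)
Your proposal is correct and takes essentially the same approach as the paper: the paper states this corollary without a separate written proof, presenting it as an immediate consequence of Proposition~\ref{prop:primary-decomposition-facts} together with Remark~\ref{rmk:polinomio-sim}, and your argument is precisely that derivation with the implicit bookkeeping made explicit (the involution $\pi\mapsto\pi^{*}$ with $\pi^{*}(x)=(-1)^{\deg\pi}\pi(-x)$, its fixed points collected into $p$, its two-element orbits split between $q$ and $s$, the unpaired factors into $n$, and every orthogonality claim, including the total isotropy of the paired components, read off from items a) and b) of the proposition). No gap to report.
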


\begin{corollary}\label{cor:semisimplicidad}
	Let $(V,\varphi)$ be an orthogonal subspace over a field $\mbK$ of characteristic not $2$ such that $\varphi$ is non-degenerate, and denote by $I_\varphi=\{v\in V: \varphi(v,v)=0 \}$ the set of $\varphi$-isotropic vectors. Let $f\colon V\to V$ be a $\varphi$-skew linear map with minimal polynomial $m_f(x)=x^\alpha\pi_1^{k_1}(x)\cdots \pi_r^{k_r}(x)$. Then:
	\begin{enumerate}[\quad a)]
		\item If $I_\varphi={0}$, $f$ is semisimple and $V=V_0\oplus V^1=\ker f\perp \im f$, so $m_f(x)=x^\alpha \pi_1(x)\dots \pi_t(x)$, $\alpha=0,1$. Moreover, any irreducible $\pi_i(x)$ is of the form $\pi_i(x)=x^{2n_i}-a_{n_i-1}x^{2(n_i-1)}-\dots -a_1x^2-a_0$, $n_i\geq 1$, and $a_0\neq 0$. In particular, $f=0$ is the only possibility if the base field is algebraically closed.
		\item There exists a permutation $\sigma\in S_r$, $\sigma^2=1$, such that $\pi_i(-x)=(-1)^{\deg\pi_i}\pi_{\sigma(i)}(x)$. In particular, $m_f(-x)=(-1)^{\deg m_f}m_f(x)$, and either $m_f(x)=x^\alpha$ or $m_f(x)=x^\alpha p(x)$, $p(0)\neq 0$, and the monomials of $p(x)$ are of even degree. So, the nonzero roots of $m_f(x)$ are of the form $\pm \lambda_1,\dots, \pm\lambda_n$.
		\item $V=V_0\perp V^1\perp V^2$, $V_{\pi_{\sigma(i)}}\cap V_{\pi_i}^\perp=0$ and the components $V_{\pi_i}$ and $V_{\pi_{\sigma(i)}}$ are equidimensional. The direct sum decomposition in $V^1$ is orthogonal, and $\bigl(V_{\pi_i}\oplus V_{\pi_{\sigma(i)}}\bigr)\perp \bigl(V_{\pi_j}\oplus V_{\pi_{\sigma(j)}}\bigr)$ for $j\neq i, \sigma(i)$.
		\item If the base field $\mbK$ is algebraically closed, $V=V_0\perp V^2$, and $m_f(x)=x^{\alpha}(x-\lambda_1)^{k_1}(x+\lambda_1)^{k_1}\cdots (x-\lambda_r)^{k_r}(x+\lambda_r)^{k_r}$.
	\end{enumerate}
\end{corollary}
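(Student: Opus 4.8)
The plan is to read essentially everything off the structural results already established, with a single genuinely computational step concealed in part a); throughout, non-degeneracy of $\varphi$ means $V^\perp=0$, and this is the hypothesis that drives the whole argument.

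I would first dispose of part b). Because $V^\perp=0$, possibility a) of Proposition~\ref{prop:primary-decomposition-facts} (which forces $\varphi$ to be degenerate) can never occur, so every irreducible factor $\pi_i$ lies in case b): there is a unique $\sigma(i)$ with $\pi_i(-x)=(-1)^{\deg\pi_i}\pi_{\sigma(i)}(x)$. Substituting $x\mapsto -x$ and using $\deg\pi_{\sigma(i)}=\deg\pi_i$ shows $\sigma(\sigma(i))=i$, so $\sigma\in S_r$ is an involution. To upgrade this to the global symmetry of $m_f$, I would apply \eqref{eq:skew-polinomial} with $q=m_f$: since $\varphi(m_f(f)(x),y)=\varphi(x,m_f(-f)(y))=0$ for all $x,y$ and $\varphi$ is non-degenerate, $m_f(-f)=0$. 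Hence the monic polynomial $(-1)^{\deg m_f}m_f(-x)$ annihilates $f$ and has degree $\deg m_f$, so by minimality it equals $m_f(x)$; that is, $m_f(-x)=(-1)^{\deg m_f}m_f(x)$. Matching irreducible factors on both sides (unique factorization) forces $k_{\sigma(i)}=k_i$, and writing $m_f=x^\alpha p(x)$ with $p(0)\neq 0$ the symmetry becomes $p(-x)=(-1)^{\deg p}p(x)$, which by Remark~\ref{rmk:polinomio-sim} says exactly that the monomials of $p$ have even degree and its roots occur in pairs $\pm\lambda_j$.

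For part c) I would specialize Corollary~\ref{cor:descomposicion-ortogonal}: since $V^3\subseteq V^\perp=0$, the decomposition collapses to $V=V_0\perp V^1\perp V^2$. The orthogonality relations between the self-paired blocks of $V^1$ and between the hyperbolic pairs $V_{\pi_i}\oplus V_{\pi_{\sigma(i)}}$ of $V^2$ are all instances of the fact (Proposition~\ref{prop:primary-decomposition-facts}\,b)) that $V_{\pi_a}\perp V_{\pi_b}$ whenever $b\neq\sigma(a)$. The identity $V_{\pi_{\sigma(i)}}\cap V_{\pi_i}^\perp=V_{\pi_{\sigma(i)}}\cap V^\perp=0$ is again that Proposition with $V^\perp=0$; and since a vector of $V_{\pi_i}$ orthogonal to $V_{\pi_{\sigma(i)}}$ is orthogonal to every $V_{\pi_k}$ and hence lies in $V^\perp=0$, the pairing $V_{\pi_i}\times V_{\pi_{\sigma(i)}}\to\mbK$ is non-degenerate, so the two components are equidimensional.

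Finally, parts a) and d) are the two extreme cases, and a) carries the only real work. When $I_\varphi=0$ the form is anisotropic, so the totally isotropic components $V_{\pi_i}$ with $i\neq\sigma(i)$ must vanish; thus $\sigma=\mathrm{id}$, $V^2=0$ and $V=V_0\perp V^1$. The crux is semisimplicity: on a self-paired component set $N=\pi_i(f)$ (or $N=f$ on $V_0$), a nilpotent of index $k_i$ (resp.\ $\alpha$) satisfying $\varphi(N(x),y)=(-1)^{\deg\pi_i}\varphi(x,N(y))$ by \eqref{eq:skew-polinomial}; if $k_i\geq 2$ I would pick $v$ with $N^{k_i-1}(v)\neq 0$ and compute $\varphi(N^{k_i-1}(v),N^{k_i-1}(v))=\pm\varphi(v,N^{2(k_i-1)}(v))=0$, since $2(k_i-1)\geq k_i$, producing a nonzero isotropic vector — a contradiction. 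This forces $k_i=1$ and $\alpha\leq 1$, so $m_f$ is squarefree and $f$ is semisimple with $V_0=\ker f$ and $V^1=\im f=V_0^\perp$; the displayed shape of each $\pi_i$ and $a_0\neq 0$ come from Remark~\ref{rmk:polinomio-sim} together with the observation that a self-paired linear factor $x-\lambda\neq x$ would need $\lambda=-\lambda=0$. That same observation yields d): over an algebraically closed field the only irreducibles are linear, self-paired ones other than $x$ are impossible, so $V^1=0$, leaving $V=V_0\perp V^2$ with factors $(x-\lambda_j)^{k_j}(x+\lambda_j)^{k_j}$ of equal exponent by part b). I expect this isotropic-vector computation to be the main obstacle; everything else is bookkeeping on top of the Proposition and the Corollary.
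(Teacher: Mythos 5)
Your proposal is correct, and for parts b), c), d) it runs along the same lines the paper compresses into one sentence (everything follows from Proposition~\ref{prop:primary-decomposition-facts}, Corollary~\ref{cor:descomposicion-ortogonal} and Remark~\ref{rmk:polinomio-sim} once $V^\perp=0$); your explicit derivation of $m_f(-f)=0$ from \eqref{eq:skew-polinomial} and non-degeneracy, giving $m_f(-x)=(-1)^{\deg m_f}m_f(x)$ and $k_{\sigma(i)}=k_i$ by unique factorization, and your non-degenerate pairing $V_{\pi_i}\times V_{\pi_{\sigma(i)}}\to\mbK$ for equidimensionality (where the paper counts dimensions of orthogonal complements) are minor, valid variants. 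The genuine divergence is at the crux of a). The paper proves $k_i=1$ by canonical-form machinery: assuming $k_i\geq 2$, it exhibits a $4n_i$-dimensional $f$-invariant subspace whose matrix is the two-companion-block form \eqref{eq:canonical-blocks}, extracts vectors with $f^{2n_i}(v)=\sum a_kf^{2k}(v)+w$ and $f^{2n_i}(w)=\sum a_kf^{2k}(w)$, and runs a several-line computation forcing $\varphi(w,w)=0$. You instead set $N=\pi_i(f)$ on the self-paired component $V_{\pi_i}$, observe via \eqref{eq:skew-polinomial} that $N$ transfers across $\varphi$ up to the sign $(-1)^{\deg\pi_i}$ (in fact $+1$, since a self-paired irreducible other than $x$ has even degree), and conclude $\varphi(N^{k_i-1}(v),N^{k_i-1}(v))=\pm\varphi(v,N^{2(k_i-1)}(v))=0$ because $2(k_i-1)\geq k_i$, producing a forbidden nonzero isotropic vector; this is sound, since the minimal polynomial of $f|_{V_{\pi_i}}$ is exactly $\pi_i^{k_i}$, so $N$ is nilpotent of index precisely $k_i$ there. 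Your argument is shorter and more elementary — it needs no generalized Jordan/companion blocks at all and uniformly absorbs the $\alpha\geq 2$ case on $V_0$ (where the paper uses the same one-step transfer trick with $N=f$) — while the paper's block construction, though heavier here, is constructive and rehearses the canonical-form technique that it then exploits in earnest in Theorem~\ref{thm:generalized-vap-decomposition}.
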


\begin{proof}
	Suppose that there are no nonzero isotropic vectors. Thus $V=V_0\oplus V^1$ follows from Corollary~\ref{cor:descomposicion-ortogonal}. If $\alpha\geq 2$,  there is a nonzero vector $v\in V_0$ such that $f^\alpha(v)=0\neq f^{\alpha-1}(v)$, and then $\varphi(f^{\alpha-1}(v), f^{\alpha-1}(v))=-\varphi(f^{\alpha-2}(v), f^{\alpha}(v))=0$. So $f^{\alpha-1}(v)=0$, a contradiction. Now consider any irreducible factor $\pi_i(x)\neq x$. From Remark~\ref{rmk:polinomio-sim}, $\pi_i(x)=x^{2n_i}-\sum_{k=0}^{n_i-1} a_{k}x^{2k}$ with $a_0\neq 0$. Denote by $ \textbf{E}_{1,2n_i}$ the $2n_i\times 2n_i$ elemental matrix ($a_{r,s}=0$ for $(r,s)\neq (1,2n_i)$ and $a_{1,2n_i}=1$), and let $C(\pi_i(x))$ be  the companion matrix described in equation~\eqref{eq:jordan-canonical-blocks}. If $k_i\geq 2$, there is a set $T$ of $4n_i$ linearly independent vectors such that $U=\spa\langle T\rangle$ is $f$-invariant, and the matrix of $f|_U$ is of the form
	\begin{equation}\label{eq:canonical-blocks}
		\left(\begin{array}{ c | c }
				                    &                            \\
				C(\pi_i(x))         & \bold{0}_{2n_i\times 2n_i} \\
				                    &                            \\
				\hline
				                    &                            \\
				\textbf{E}_{1,2n_i} & C(\pi_i(x))                \\
				                    &
			\end{array}\right)
	\end{equation}

	Then we can find  $v,w\in T$ such that $f^{2n_i}(v)=\sum_{k=0}^{n_i-1} a_{k} f^{2k}(v)+w$ and $f^{2n_i}(w)=\sum_{k=0}^{n_i-1} a_{k} f^{2k}(w)$. So $w=\frac{1}{a_0}(-\sum_{k=1}^{n_i-1}a_kf^{2k}(w)+f^{2n_i}(w))$.  Using previous equalities, we have that:
	\[
		\begin{split}
			\varphi(v,w) & =\frac{1}{a_0}(\sum_{k=1}^{n_i-1} -a_k\varphi(v,f^{2k}(w))+\varphi(v,f^{2n_i}(w)))                                  \\
			             & =\frac{1}{a_0}(\sum_{k=1}^{n_i-1} -a_k\varphi(f^{2k}(v),w)+\varphi(f^{2n_i}(v),w))                                  \\
			             & =\frac{1}{a_0}(\sum_{k=1}^{n_i-1} -a_k\varphi(f^{2k}(v),w)+\sum_{k=0}^{n_i-1} a_k\varphi(f^{2k}(v),w)+\varphi(w,w)) \\
			             & =\frac{1}{a_0}(a_0\varphi(v,w)+\varphi(w,w))=\varphi(v,w)+\frac{1}{a_0}\varphi(w,w).
		\end{split}
	\]
	And thus, $\varphi(w,w)=0$, a contradiction. This proves assertion a). From $V^\perp=0$, the decomposition in c) and  items b) and d) follow from Proposition~\ref{prop:primary-decomposition-facts} and Remark~\ref{rmk:polinomio-sim}. Finally, to get $\dim V_{\pi_{\sigma(i)}}=\dim V_{\pi_i}$, note that $V_{\pi_{\sigma(i)}}\cap V_{\pi_i}^\perp=V_{\pi_{i}}\cap V_{\pi_{\sigma(i)}}^\perp=0$ because of $V^\perp=0$ and:
	\[
		\dim V_{\pi_{\sigma(i)}}^\perp+\dim V_{\pi_{\sigma(i)}}=\dim V=\dim V-\dim V_{\pi_i}+\dim V_{\pi_{\sigma(i)}},
	\]
	From previous equalities, we conclude the equidimensionality.
\end{proof}

\begin{example}\label{ex:teorema-espectral-real}
	Applying previous Corollary~\ref{cor:semisimplicidad} we get the well-know Spectral Theorem (skew-Hermitian case). Let $(V, \varphi)$ be a real orthogonal vector space without isotropic vectors, and $f\colon V\to V$ a nonzero $\varphi$-skew linear map. Then either $\varphi(v,v)>0$ for all $v\in V$ (Euclidean case) or $\varphi(v,v)<0$ for all $v\in V$. Moreover, $V=\ker f\perp \im f$, and there are $0<\lambda_1\leq\lambda_2\leq\cdots\leq\lambda_t$ such that $m_f(x)=x(x^2+\lambda_1)\cdots(x^2+\lambda_t)$. So, the eigenvalues of $f$ are either $0$ or purely imaginary  ($\pm i\sqrt{\lambda_k}, i^2=-1, i\in \mbC$). In the Euclidean case, for any nonzero vector  $v\in V_{x^2+\lambda_i}$, $f^2(v)=-\lambda_i v$. Thus the vector space $0\neq U_i(v)=\spa\langle v, f(v)\rangle$ is $2$-dimensional and $f$-invariant. Moreover, $\varphi(v,f(v))=0$ and $\varphi(f(v),f(v))=\lambda_i\varphi(v,v))>0$. Thus  $U_i(v)\cap U_i(v)^\perp=0$, and therefore $V=U_i(v)\oplus U_i(v)^\perp$. Even more, assuming $\varphi(v,v)=1$, $\{v_{i,1}=v, v_{i,2}=-\frac{f(v)}{\sqrt{\lambda_i}}\}$ is an orthonormal basis of $U_i(v)$ such that $f(v_{i,1})=-\sqrt{\lambda_i}v_{i,2}$ and $f(v_{i,2})=\sqrt{\lambda_i}v_{i,1}$. Since $U_i(v)$ and $U_i(v)^\perp$ are $f$-invariant subspaces, just reducing dimension by orthogonal decompositions and rescaling, we get an orthonormal basis of $V$ such that the pair $(f, \varphi)$ is represented by the pair of matrices $(A_f, \id_{\dim V})$ and $A_f$ is the matrix that decomposes as diagonal blocks
	\begin{equation}\label{spectral-real-decomposition}
		A_f=\Big(\underbrace{0, \dots, 0}_{d_0}, \left(\begin{array}{ c c }
				0                 & \sqrt{\lambda_1} \\
				-\sqrt{\lambda_1} & 0
			\end{array}\right)^{d_1}, \dots, \left(\begin{array}{ c c }
				0                 & \sqrt{\lambda_t} \\
				-\sqrt{\lambda_t} & 0
			\end{array}\right)^{d_t}\Big).
	\end{equation}
	Here $d_i=\frac{1}{2}\dim V_{x^2+\lambda_i}$ is the number of $2\times 2$ matrix $\lambda_i$'s blocks and $d_0=\dim \ker f$ denotes the number of $1\times 1$ matrix $0$'s blocks. For $f|_{V_0}$, we use any orthonormalization process. This is just the Spectral Theorem for real matrices.

	For any arbitrary field, assuming no isotropic vectors and the irreducible semisimple decomposition $m_f(x)=x(x^2+\mu_1)\cdots(x^2+\mu_t)$, the pair $(f, \varphi)$ is represented by the pair of matrices $(A_f, B_\varphi)$ where $B_\varphi$ is a diagonal matrix and
	\begin{equation}	\label{spectral-general-noniso-decomposition}
		A_f=\Big(\underbrace{0, \dots, 0}_{d_0}, \left(\begin{array}{ c c }
				0 & - \mu_1 \\
				1 & 0
			\end{array}\right)^{d_1}, \dots, \left(\begin{array}{ c c }
				0 & - \mu_t \\
				1 & 0
			\end{array}\right)^{d_t}\Big).
	\end{equation}

\end{example}
Next, let us return to assertion c) in Corollary~\ref{cor:semisimplicidad}, and consider the orthogonal decomposition $V=V_0\perp V^1\perp V^2$. The subspace $V_0$ and the subspaces of $V^2$ of the form $V_\lambda \oplus V_{-\lambda}$ (associated with the primary components $V_{\pi(x)}$ where $\pi(x)=x\pm\lambda$) admit canonical forms determined by the Jordan blocks $J_n(\lambda)$ described in equation~\eqref{eq:jordan-canonical-blocks}.

\begin{theorem}\label{thm:generalized-vap-decomposition}
	Let $(V, \varphi)$ an orthogonal vector space over an arbitrary field of characteristic not two such that $\varphi$ is non-degenerate. Suppose that $f\colon V\to V$ is a $\varphi$-skew automorphism, and $\lambda$ is a nonzero $f$-eigenvalue. Then $V=(V_{\lambda}\oplus V_{-\lambda})\oplus (V_{\lambda}\oplus V_{-\lambda})^\perp$ and $(V_{\lambda}\oplus V_{-\lambda})^\perp$ is a $f$-invariant subspace. The primary components $V_{\lambda}$, $V_{-\lambda}$, are equidimensional and totally isotropic subspaces. Even more, $V_\lambda\oplus V_{-\lambda}$ decomposes as orthogonal sum of a finite number of $f$-invariant subspaces $(W_i, \varphi|_{W_i})$ that have a basis in which the pair $(f|_{W_i}, \varphi|_{W_i})$ is expressed by a matrix pair $(A_{f|_{W_i}},B_{\varphi|_{W_i}})$ of the following type:
	\begin{equation}\label{eq:isot-nonzero-vap}
		\Big( \left(\begin{array}{ c c }
					J_{n_i}(\lambda) & 0                   \\
					0                & -J_{n_i}(\lambda)^t
				\end{array}\right), \left(\begin{array}{ c c }
					0         & \id_{n_i} \\
					\id_{n_i} & 0
				\end{array}\right)\Big).
	\end{equation}

	And, for the zero eigenvalue, $V_0$ is an orthogonal sum of $f$-invariant subspaces $U_j$ that have a basis in which the pair $(f|_{U_j}, \varphi|_{U_j})$ is expressed by a matrix pair $(A_{f|_{U_j}},B_{\varphi|_{U_j}})$ as in equation~\eqref{eq:isot-nonzero-vap} with $\lambda=0$ and $n_j=2k_j$ or (here $\mathbf{e}_1=(1,0,\dots,0)\in \mbK^{n_j}$)

	\begin{equation}\label{eq:isot-zero-vap}
		\Big(\left(
		\begin{array}{c|c|c}

				J_{n_j}(0)   &                 &               \\\hline
				\mathbf{e}_1 & 0               &               \\\hline
				             & -\mathbf{e}_1^t & -J_{n_j}(0)^t \\
			\end{array}
		\right), (-1)^{n_j}\mu_j\left(
		\begin{array}{c|c|c}

				          &   & \id_{n_j} \\\hline
				          & 1 &           \\\hline
				\id_{n_j} &   &           \\
			\end{array}
		\right)\Big).
	\end{equation}

\end{theorem}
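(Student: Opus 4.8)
The plan is to obtain the orthogonal splitting and the isotropy/equidimensionality claims directly from the earlier results, and then to reduce the construction of the canonical matrix pairs to one inductive step in which one peels off an $f$-invariant, $\varphi$-nondegenerate, indecomposable piece and recurses on its orthogonal complement. First I would record that for $\pi(x)=x\mp\lambda$ with $\lambda\neq 0$ the involution $\sigma$ of Corollary~\ref{cor:semisimplicidad}(b) interchanges $x-\lambda$ and $x+\lambda$, so $i\neq\sigma(i)$; hence by Proposition~\ref{prop:primary-decomposition-facts}(b) the primary components $V_\lambda,V_{-\lambda}$ are totally isotropic, equidimensional by Corollary~\ref{cor:semisimplicidad}(c), and put into perfect duality by $\varphi$. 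Consequently $\varphi$ is nondegenerate on $V_\lambda\oplus V_{-\lambda}$, which yields $V=(V_\lambda\oplus V_{-\lambda})\perp(V_\lambda\oplus V_{-\lambda})^\perp$ with $f$-invariant complement. The substantial task is the explicit block form.

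For the nonzero eigenvalue I write $f=\lambda\,\id+N$ on $V_\lambda$ and $f=-\lambda\,\id+M$ on $V_{-\lambda}$, with $N,M$ nilpotent. Skew-adjointness \eqref{eq:power-invarianza} collapses to $\varphi(Nu,w)=-\varphi(u,Mw)$, i.e. $M$ is the negative adjoint of $N$ under the perfect pairing; in particular $N$ and $M$ share the same Jordan block sizes, so $N^n=M^n=0$ where $n$ is the maximal block size. I then take a Jordan chain $e_1,\dots,e_n$ for a maximal block of $N$ (with $Ne_1=0$, $Ne_j=e_{j-1}$), choose $f_1\in V_{-\lambda}$ with $\varphi(e_i,f_1)=\delta_{i1}$ (possible by perfect duality), and set $f_{j+1}:=-Mf_j$. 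A short induction using $\varphi(e_i,Mf_j)=-\varphi(Ne_i,f_j)$ gives $\varphi(e_i,f_j)=\delta_{ij}$, while $f_{n+1}=(-1)^nM^nf_1=0$ forces $Mf_n=0$. Hence $W=\spa\{e_i,f_j\}$ is $f$-invariant, the Gram matrix in this basis is the hyperbolic one, and reading off the actions of $N$ and $M=-N^{*}$ shows the pair $(f|_W,\varphi|_W)$ is exactly \eqref{eq:isot-nonzero-vap}. Splitting $V_\lambda\oplus V_{-\lambda}=W\perp W^{\perp}$ (both summands $f$-invariant and respecting the isotropic factors) and inducting on dimension closes this case.

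For the zero eigenvalue I use that $\varphi|_{V_0}$ is nondegenerate and $N:=f|_{V_0}$ is nilpotent and $\varphi$-skew; \eqref{eq:power-invarianza} gives the parity identity $\varphi(x,N^{s}x)=0$ for odd $s$. With $m$ the maximal nilpotency index I consider $\beta(x,y)=\varphi(x,N^{m-1}y)$, whose radical is $\ker N^{m-1}$, so $\beta$ descends to a nondegenerate form on $V_0/\ker N^{m-1}$ of dimension equal to the number of size-$m$ blocks. If $m=2n+1$ is odd, $\beta$ is symmetric and nonzero, hence admits an anisotropic vector $v$ with $\varphi(v,N^{2n}v)\neq 0$; the cyclic space $\spa\{N^{k}v\}$ is then nondegenerate, and relabelling $N^{k}v$ with the signs dictated by the chain $a_n\to\cdots\to a_1\to b\to -c_1\to\cdots$ produces \eqref{eq:isot-zero-vap}, the scalar $\mu_j$ recording $\varphi(v,N^{2n}v)$. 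If $m$ is even the parity identity makes $\beta$ \emph{alternating}, so the number of size-$m$ blocks is even and one finds a hyperbolic pair $v,w$ with $\varphi(v,N^{m-1}w)\neq 0$; the span of the two chains is nondegenerate and, after the same hyperbolic clean-up as in the nonzero case, realizes \eqref{eq:isot-nonzero-vap} with $\lambda=0$ and $n_j=m$. Peeling off this piece and inducting completes the decomposition of $V_0$.

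I expect the genuine difficulty to be confined to the zero-eigenvalue analysis: the dichotomy between self-dual odd blocks \eqref{eq:isot-zero-vap} and hyperbolically paired even blocks rests on the symmetric-versus-alternating behaviour of $\beta$, and the normalizations (clearing the nonzero self-pairings $\varphi(v,N^{s}v)$ with $s<m-1$, and pinning down the invariant $\mu_j$, which need not be a square and so cannot be scaled away over a general field) require careful but routine bookkeeping; by contrast the orthogonal splitting and the nonzero-eigenvalue blocks follow cleanly from the duality $M=-N^{*}$.
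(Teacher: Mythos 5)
Your proposal is correct in substance and reaches the theorem by a genuinely different implementation of the same peel-off-and-recurse strategy. For the nonzero eigenvalue, where the paper fixes a maximal chain $v_r=(f-\lambda\id)^r(v_0)$ in $V_\lambda$ and then solves a triangular linear system to replace $w$ by a dual vector $w'$, you instead exploit the duality $M=-N^{*}$ across the perfect pairing $V_\lambda\times V_{-\lambda}\to\mbK$ and generate the dual chain directly by $f_{j+1}=-Mf_j$ from a single $f_1$ dual to the bottom of the Jordan chain; your induction $\varphi(e_i,f_j)=\delta_{ij}$ is valid, and maximality of the block is exactly what gives $f_{n+1}=\pm M^{n}f_1=0$, so this cleanly replaces the paper's triangular system. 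For $V_0$, your organizing device --- the descended form $\beta(x,y)=\varphi(x,N^{m-1}y)$ on $V_0/\ker N^{m-1}$, symmetric for $m$ odd and alternating for $m$ even --- is more conceptual than the paper's vector-level argument (the parity identity plus the $v'=v+w$ trick), and it buys something the paper's proof only implies: an immediate explanation of why even-size blocks occur with even multiplicity. Two points deserve more than your ``routine bookkeeping'' label, though neither is fatal. First, in the even case the phrase ``the same hyperbolic clean-up as in the nonzero case'' understates the work: for $\lambda\neq 0$ the two chains are automatically isotropic because they lie in the totally isotropic components $V_{\pm\lambda}$, whereas for $\lambda=0$ the self-pairings $\varphi(N^{s}v,N^{t}v)$ with $s+t<m-1$ even may be nonzero, so besides dualizing the $w$-chain you must also correct $v$ and $w$ by elements of the opposite chain to make both chains isotropic; the paper is itself terse at this exact spot (``the previous procedure\dots remains valid''), so spelling this out would improve on both texts. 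Second, your claim that the span of the two chains is nondegenerate is true but needs the small anti-triangular argument (pair a putative radical vector of lowest order $a$ against $N^{m-1-a}v$ and $N^{m-1-a}w$, using $\beta(v,v)=\beta(w,w)=0$); and in the odd case the clearing of the pairings $\varphi(v,N^{2j}v)$ with $2j<m-1$ is precisely the paper's explicit recursion $w_j=w_{j-1}-\frac{\varphi(w_{j-1},f^{2n-2j}(w_{j-1}))}{2\varphi(w_{j-1},f^{2n}(w_{j-1}))}f^{2j}(w_{j-1})$, which uses characteristic not $2$; your observation that the resulting $\mu_j$ is a genuine invariant that cannot in general be scaled away agrees with the parameter $\mu_j\in\mbK^{\times}$ in \eqref{eq:isot-zero-vap} and \eqref{eq:isot-zero-vap-Caalim}.
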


\begin{proof}
	From Corollaries~\ref{cor:descomposicion-ortogonal} and~\ref{cor:semisimplicidad}, we establish that $V_{\mu}\cap V_{-\mu}^\perp=0$ and $V_{\mu}\subseteq V_{\mu}^\perp$ for $\mu=\lambda, -\lambda$. This readily yields $(V_{\lambda}\oplus V_{-\lambda})^\perp\cap (V_{\lambda}\oplus V_{-\lambda})=0$, and consequently, $V=(V_{\lambda}\oplus V_{-\lambda})\oplus (V_{\lambda}\oplus V_{-\lambda})^\perp$ due to the non-degeneracy of $\varphi$. Additionally, $\dim V_\mu=\dim V_{-\mu}$, and the multiplicities $k_{\mu}$ of the eigenvalues $\mu=\pm\lambda$ in the minimum polynomial of $f$ are equal. Therefore, $V_{\lambda}=\ker (f-\lambda\id)^{k_{\lambda}}$ and $V_{-\lambda}=\ker (f+\lambda\id)^{k_{\lambda}}$. Setting $k:=k_\lambda-1$, we can take a nonzero $v\in V_\lambda$ such that $(f-\lambda\id)^k(v)\neq 0$. Since $(f-\lambda\id)^k(v)\in V_\lambda$ and $V_{\lambda}\cap V_{-\lambda}^\perp=0$, there exists $w\in V_{-\lambda}$ with $\varphi((f-\lambda\id)^k(v),w)=\alpha\neq 0$. For any $u\in V$, equation~\eqref{eq:skew-polinomial} implies
	\[
		\varphi((f-\lambda\id)^k(v),u)=(-1)^k\varphi(v,(f+\lambda\id)^k(u)),
	\]
	and then $\varphi(v,(f+\lambda\id)^k(w))=(-1)^k\alpha\neq 0$, so $(f+\lambda\id)^k(w)\neq 0$. Rescaling $w$ if necessary, we can suppose $\alpha=1$.

	Next, define $v:=v_0, w:=w_0$, $v_r:=(f-\lambda\id)^r(v_0)$ and $w_s:=(f+\lambda\id)^s(w_0)$ for $1\leq r,s\leq k$. We point out that $\varphi(v_r, w_s)=(-1)^{s}\varphi(v_{r+s}, w_0)$ by applying equation~\eqref{eq:skew-polinomial}. We now set a new $w'=w'_0$ obtained in the linear span of $\{w_0 = w, w_1, \dots, w_k\}$, i.e.,
	\begin{equation*}
		w' = \sum_{i=0}^{k} \alpha_i w_i.
	\end{equation*}
	Those $\alpha_i$ coefficients are obtained as solutions from the system of equations for $j=0, \dots, k$:
	\begin{equation*}
		\varphi(v_j, w') = \delta_{jk},
	\end{equation*}
	where $\delta_{jk}$ is the Kronecker delta. This is a triangular system whose diagonal coefficients are $\pm 1$. Therefore there is a unique solution $w'$, and, as $\alpha_0 = 1$, $w' \in \ker(f+\lambda\id)^{k+1}$ but $w' \notin \ker(f+\lambda\id)^{k}$. Even more, $\varphi(v_k, w'_0)=1$ and $\varphi(v_r, w'_0)=0$ otherwise. The subspace
	\begin{equation*}
		W_1=\spa\langle v_r, w'_s: r,s\geq 0\rangle,
	\end{equation*}
	where $w'_s:=(-1)^s(f+\lambda\id)^s(w'_0)$, is $f$-invariant. And the ordered set $\{v_k,\dots, v_0, w'_0, \dots w'_k\}$ is a basis in which $(f|_{W_1}, \varphi|_{W_1})$ is expressed by a matrix pair as described in expression~\eqref{eq:isot-nonzero-vap} with $n_1=k_\lambda$. Since $W_1$ is regular, $V_{\lambda}\oplus V_{-\lambda}=W_1\oplus W_1^\perp$. Note that $W_1^\perp$ is $f$-invariant and regular, so recursively we get the desired ortogonal decomposition for the nonzero eigenvalue $\lambda$.

	In the sequel we assume that $0\neq V_0$, equivalent to $0$ being an eigenvalue of $f$. Let $k_0$ be the multiplicity of this eigenvalue in the minimum polynomial, and $k:=k_0-1$. Hence, $V_0=\ker f^{k+1}$ and there exists a nonzero $v\in V_0$ such that $f^k(v)\neq 0$. We will consider two different cases depending on the parity of $k$ and, in both cases, we will start with the previous $v$. Let us first suppose that $k=2n-1$ is odd and $k_0=2n$. Then,
	\[
		\varphi(v,f^k(v))=(-1)^k\varphi(f^k(v),v)=-\varphi(v,f^k(v)).
	\]
	Since $2\neq 0$, $\varphi(v,f^k(v))=0$ and, we can find an element $w\in V_0$ with $\varphi(w,f^k(v))\neq 0$ by means of the non-degenerancy of $\varphi$ and the equality $\varphi(f^k(v),V)=\varphi(f^k(v),V_0)$. Moreover, from $\varphi(w,f^k(v))=(-1)^k\varphi(f^k(w),v)$, we get $f^k(w)\neq 0$. This way, we have $0\neq v,\, w\in V_0$ such that $f^k(v)\neq0\neq f^k(w)$ and $\varphi(v,f^k(w))\neq 0$. In this context, the previous procedure for the nonzero eigenvalue $\lambda$ remains valid, allowing us to identify an $f$-invariant and regular vector space $U_1$ with a basis in which $(f|_{U_1}, \varphi|_{U_1})$ is expressed by a matrix pair as described in equation~\eqref{eq:isot-nonzero-vap} with $n_1=k_0=2n$ and $\lambda=0$.

	The other case arises when $k=2n$ is even, so $k_0=2n+1$. We can assume that $\varphi(v,f^k(v))\neq 0$. Otherwise, $\varphi(v,f^k(v))=0$ and, since $\varphi$ is non-degenerate, we could find $w\in V_0$ such that $\varphi(w,f^k(v))\neq 0$. Consequently, $f^k(w)\neq 0$. If $\varphi(w,f^k(w))\neq 0$, we can take $w$ instead of $v$. If not, consider $0\neq v'=v+w\in V_0$. We have that $f^k(v')\neq 0$ since $\varphi(v,f^k(v'))=\varphi(v,f^k(w))\neq 0$.

	Moreover, using the previous assumptions and $k=2n$, we have
	\[
		\varphi(v',f^k(v'))=\varphi(v,f^k(w))+\varphi(w,f^k(v))=2\varphi(v,f^k(w))\neq 0.
	\]

	We can then replace $v$ with $v'$ that fulfills the required condition.
	Starting next from $v\in V_0$ such that $0\neq \mu=\varphi(v, f^k(v))$, we define recursively $w_0=v$ and for $1\leq j\leq n$:
	\[
		w_{j}=w_{j-1}-\frac{\varphi(w_{j-1},f^{2n-2j}(w_{j-1}))}{2\varphi(w_{j-1}, f^{2n}(w_{j-1}))}f^{2j}(w_{j-1})
	\]
	A straightforward computation shows that $w_n$ satisfies $\varphi(w_n, f^{2n}(w_n))=\mu\neq 0$, and $\varphi(w_n, f^t(w_n))=0$ for any $1\leq t<2n$ (for $t$ odd numbers, apply equation~\eqref{eq:power-invarianza}). Even more,
	\[\varphi(f^s(w_n),f^t(w_n ))=\left\{\begin{array}{ll}
			0         & \text{if } s+t\neq 2n \\
			(-1)^s\mu & \text{if } s+t=2n
		\end{array}\right.\]
	Then, the subspace
	\[
		U_1=\spa \langle w_n, f(w_n), \dots, f^{2n}(w_n)\rangle,
	\]
	is $f$-invariant and regular, and the ordered set $\{v_0, \dots, v_{2n}\}$ where $v_j:=f^j(w_n)$ is a basis in which $(f|_{U_1}, \varphi|_{U_1})$ is expressed by a matrix pair as described in equation~\eqref{eq:isot-zero-vap-Caalim} with $n_1=n$, so $k_0=2n_1+1$. By reescaling and reordering in the following way
	\[
		v_{n-1}, v_{n-2}, \dots, v_0, v_n, -v_{n+1}, v_{n+2}, \dots, (-1)^nv_{2n},
	\]
	we arrive at the matrix pair described in equation~\eqref{eq:isot-zero-vap}.

	Concluding the proof, note that $U_1$ is a regular subspace, thus $V_0=U_1\oplus (U_1)^\perp$. Since $(U_1)^\perp$ is $f$-invariant and regular, recursively accounting for the parity in each step yields the desired orthogonal decomposition.
\end{proof}

\begin{remark}\label{rmk:split-skew}
	Over the complex field, the previous result is established in \cite[Theorem~2]{Horn_Merino_1999} as a classical characterization of the Jordan canonical forms of complex orthogonal and skew-symmetric matrices. The analogous result over algebraically closed fields of characteristic not 2 appears in \cite[Corollary~1.2]{Caalim_Futorny_Sergeichuk_Tanaka_2020}. Here, we provide an alternative proof that highlights a recursive constructive method based on straightforward linear arguments. The canonical form $(A_{f|_{U_j}}, B_{\varphi|_{U_j}})$ proposed in \cite{Caalim_Futorny_Sergeichuk_Tanaka_2020} is:
	\begin{equation}\label{eq:isot-zero-vap-Caalim}
		\Big(J_{2n_j+1}(0), \mu_j\left(\begin{array}{ cccc }
				  &    &         & 1 \\
				  &    & \iddots &   \\
				  & -1 &         &   \\
				1 &    &         &\end{array}\right)\Big), \quad \mu_j \in \mbK^\times.
	\end{equation}
\end{remark}


\section{Generalized oscillator algebras}\label{s:generalOscillator}
Along this section, we assume the base field $\mbK$ is of characteristic not $2$ unless otherwise stated.

In any quadratic solvable and non-abelian $n$-dimensional Lie algebra $(L, \varphi)$, such that $Z(L)\subseteq L^2=Z(L)^\perp$ the centre is a totally isotropic ideal. According to \cite[Proposition 2.9]{Favre_Santharoubane_1987}, any solvable quadratic Lie algebra (characteristic zero) contains a central isotropic element $z$, and the algebra $L$ can be obtained as a double extension of the $(n-2)$-dimensional quadratic and solvable Lie algebra $\frac{(\mbK\cdot z)^\perp}{\mbK\cdot z}$. Applying this one-step process iteratively, we get the class of solvable quadratic algebras from the class of quadratic abelian Lie algebras. Let's start this section with this construction.

\begin{example}[One-dimensional-by-abelian double extension]\label{ex:one-dim-dob-ext}
	Let $(V,\varphi)$ be an orthogonal $\mbK$-vector space with $\varphi$ non-degenerate, and $\delta$ be any $\varphi$-skew map. Denote by $\delta^*$ the dual $1$-form of $\delta$, so $\delta^*\colon\mbK\cdot \delta\to \mbK$ and $\lambda\delta\mapsto \lambda$. On the vector space $\mfd(V, \varphi, \delta):=\mbK \cdot \delta\oplus V\oplus \mbK \cdot \delta^*$ we introduce the binary product
	\begin{equation}\label{eq:bracket-oscillator}
		[t\delta + x+ s\delta^*,t'\delta + y+ s'\delta^*] :=  t\delta(y)-t'\delta(x) + \varphi(\delta(x), y)\delta^*,
	\end{equation}
	for $t,t',s,s' \in \mbK$, $x, y\in V$. It is easily checked that this product is skew and satisfies the Jacobi identity, $\underset{\circlearrowright a,b,c}{\sum}[[a,b],c]=0$. Then, $(\mfd(V, \varphi, \delta), [a,b])$ is a Lie algebra, and the bilinear form
	\begin{equation}\label{eq:bilinear-invariant}
		\varphi_\delta(t\delta + x+ s\delta^*,t'\delta +y+ s'\delta^*)=ts'+st'+\varphi(x,y)
	\end{equation}
	is symmetric, invariant, non-degenerate, and it extends $\varphi$ by the hyperbolic space $\spa_{\mbK}\langle\delta, \delta^*\rangle$. The method of constructing this algebra is known as \emph{double extension}. Over fields of characteristic zero, this procedure was introduced simultaneously by several authors in the 80s (see \cite{Bordemann_1997} and references therein), but starting from any quadratic Lie algebra $(L, \varphi)$ and any $\delta\in \der_\varphi L$. In this case, the bracket $[x,y]_L$ must be added in the binary product given by equation~\eqref{eq:bracket-oscillator}. To obtain new indecomposable quadratic algebras, it is important to take a non-inner $\delta$ $\varphi$-skew derivation \cite[Proposition 5.1]{FigueroaOFarrill_Stanciu_1996}. This one-dimensional construction is also valid in characteristic other than $2$ (see \cite[Theorem 2.2]{Bordemann_1997} for a more detailed explanation).
\end{example}

\begin{example}[Real oscillator algebras]\label{ex:real-oscillator}
	Now fix $\mbK=\mbR$ and consider an Euclidean vector space $(V,\varphi)$. According to Example~\ref{ex:teorema-espectral-real}, any $\varphi$-skew automorphism $\delta\colon V\to V$ is completely determined by an ordered $n$-tuple of positive real entries, $\lambda=(\lambda_1, \dots, \lambda_n)$ with $\lambda_i\leq \lambda_{i+1}$, and $\dim V=2n$. Even more, there is an orthonormal basis $\{x_1, \dots x_n, y_1,\dots, y_n\}$ for which $\delta(x_i)=-\lambda_iy_i$ and $\delta(y_i)=\lambda_ix_i$. By using the double extension procedure  given in Example~\ref{ex:one-dim-dob-ext} we obtain the series of quadratic real Lie algebras:
	\begin{equation}\label{eq:oscillator}
		(\mfd_{2n+2}(\lambda),\varphi_\delta)=(\spa_{\mbR}\langle\delta, x_1, \dots x_n, y_1,\dots, y_n, \delta^*\rangle, \varphi_\delta).
	\end{equation}
	The Lie bracket and the invariant bilinear form $\varphi_\delta$ are given in equation~\eqref{eq:bracket-oscillator} and~\eqref{eq:bilinear-invariant}. Real oscillator algebras are quadratic and solvable. The Witt index of $\varphi_\delta$ is $1$, so $\varphi_\delta$ is a Lorentzian form. This characteristic leads to these algebras also being referred to in the literature as (real) Lorentzian algebras (see \cite[Definition II.3.16]{Hilgert_Hofmann_Lawson_1989}). The one-dimensional-by-abelian construction of this class of algebras appears in \cite[Proposition II.3.11]{Hilgert_Hofmann_Lawson_1989}. The algebras obtained through this procedure are called \emph{standard solvable Lorentzian algebras}  of dimension $2n+2$, and they are denoted as $A_{2n+2}$.
\end{example}

Previous examples serve as the introduction to the definition of oscillator algebras over arbitrary fields of characteristic not $2$. The definition first appears in \cite{Benito_RoldanLopez_2023b}.

\begin{definition}\label{def:one-by-abelian-extension}
	Let $(V, \varphi)$ be a $\mbK$-vector space of dimension greater than or equal to $2$, endowed with a symmetric and non-degenerate bilinear form $\varphi$. For any $\varphi$-skew map $\delta\colon V\to V$ and its dual $1$-form linear map $\delta ^*\colon\mbK\cdot \delta\to \mbK$, described as $\lambda\delta\mapsto \lambda$, the one-dimensional-by-abelian double extension Lie algebra $\mfd(V, \varphi, \delta):=\mbK \cdot \delta\oplus V\oplus \mbK \cdot \delta^*$ with product as in equation~\eqref{eq:bracket-oscillator} and bilinear form as in expression~\eqref{eq:bilinear-invariant} will be referred to in this paper as a \emph{generalized $\mbK$-oscillator algebra} and as a \emph{$\mbK$-oscillator algebra} in the particular case where $\delta$ is automorphism.
\end{definition}

In the sequel, we use the following terminology. A Lie algebra $L$ is \emph{reduced} if $Z(L)\subseteq L^2$ and \emph{local} if $L$ has only one maximal ideal \cite[Definition 3.1]{Bajo_Benayadi_2007}. The \emph{derived series of $L$} is recursively defining as $L^{(0)}=L$  and $L^{(k+1)}=[L^{(k)}, L^{(k)}]$. If $L^{(k)}=0$ for some $k\geq 1$, it is said that $L$ is \emph{solvable}.  The \emph{descending central series of $L$} is defined with terms $L^1=L$ and $L^{k+1}=[L^{k}, L]$ and the \emph{upper central series} is defined from $Z_1(L)=Z(L)=\{x\in L:[x,L]=0\}$ (\emph{centre} of $L$) and $Z_{k+1}(L)=\{x\in L: [x,L]\subseteq Z_k(L)\}$. The algebra $L$ is \emph{nilpotent} if there exists $k\geq 2$ such that $L^k =0$. The smallest $k$ such that $L^k\neq 0$ and $L^{k+1=0}$ is the \emph{nilpotency index }of $L$. A quadratic algebra is said to be \emph{decomposable} if it contains a proper ideal $I$ that is non-degenerate ($I\cap I^\perp=0$, the ideal is also called a \emph{regular subspace}), and \emph{indecomposable} otherwise. Equivalently, $L$ is decomposable if and only if $L=I\oplus I^\perp$. In the literature, the terms \emph{reducible} and \emph{irreducible} are also used as synonyms for decomposable and indecomposable.

\begin{remark}\label{rmk:decomp-indecomp}
	In general, a non-reduced quadratic Lie algebra splits as an orthogonal sum, as ideals, of an abelian quadratic algebra and another reduced quadratic algebra. In characteristic zero, this assertion is just given in \cite[Theorem 6.2]{Tsou_Walker_1957}. Its proof also works in characteristics other than $2$. Therefore, any non-reduced quadratic Lie algebra is decomposable.
\end{remark}

Consider now the class of Lie algebras $\mfh$ that satisfy:
\begin{equation}\label{eq:heisenberg}
	\mfh^2=\mbK \cdot z=Z(\mfh).
\end{equation}
Since, for any $x, y \in \mfh$, $[x,y]=\lambda_{x,y}z$ and $\lambda_{x,y}\in \mbK$, the structure constants $\lambda_{x,y}$ allow us  to define in $\mfh$ the skew-symmetric bilinear form $\varphi_{\mfh}(x,y):= \lambda_{x,y}$. Then,  if $V$ is a complement vector space of the centre, i.e., $\mfh=V\oplus \mbK \cdot z$, the vector space is regular. The non-degeneracy of the skew-symmetric form $\varphi_\mfh|_V$  implies that there is a basis of $V$, $\{v_1, \dots, v_n, w_1, \dots, w_n,\}$ such that $\varphi_{\mfh}(v_i,w_j)=\delta_{ij}$ and $\varphi_{\mfh}(v_i, v_j)=\varphi_{\mfh}(w_i, w_j)=0$. Therefore, the algebras that satisfy equation~\eqref{eq:heisenberg} have odd dimension and exhibit a basis $\mathcal{B}=\{x_1, \dots, x_n, y_1, \dots, y_n, z\}$ such that $[v_i,w_j]=\delta_{ij}z$ and all other products are zero. Thus, for any $n\geq 1$, there is only one algebra of dimension $2n+1$ that satisfies equation~\eqref{eq:heisenberg}. In characteristic zero, these algebras are known as Heisenberg algebras. Throughout the paper, we will refer to them as \emph{generalized $\mbK$-Heisenberg algebras}.

\begin{lemma}\label{lem:general-oscillator-properties} Let $A=\mfd(V, \varphi, \delta)$ be a generalized $\mbK$-oscillator Lie algebra. Then the following holds:
	\begin{enumerate}[\quad a)]
		\item For any $k\geq 1$, $A^{k+1}=\im \delta^k \oplus  \mbK\cdot \delta^*$ if $\delta^k\neq 0$ and, in case that $\delta^k=0$,  $A^{m}=0$ for any $m\geq k+1$.
		\item For any $k\geq 1$, $Z_{k}(A)=\ker \delta^{k} \oplus \mbK\cdot \delta^*$  if $\delta^k\neq 0$ and, in case that $\delta^k=0$, $Z_{k}(A)=A$ for any $m\geq k$.
		\item $(Z_k (A))^\perp=A^{k+1}$ for any $k\geq 1$.
		\item $A$ is nilpotent if and only if $\delta$ is a nilpotent map. And the nilpotency index is the degree of the minimum polynomial of $\delta$.
	\end{enumerate}
	In addition, if $\delta\neq 0$,
	\begin{enumerate}[\qquad a)]
		\setcounter{enumi}{4}
		\item $Z(A)\subseteq A^2$ if and only if $\ker \delta\subseteq \im \delta$.
		\item $A^{(2)}= \mbK\cdot \delta^*$ and $A^{(3)}=0$. So, $A$ is solvable and the quotient $\displaystyle\frac{A^2+Z(A)}{Z(A)}$ is abelian.
		\item If $\delta$ is an automorphism, the dimension of $V$ is even, and the derived algebra $A^2=V\oplus \mbK \cdot \delta^*$ is a generalized $\mbK$-Heisenberg algebra. In particular, $V$ has a basis $\{v_1, \dots v_n, w_1, \dots, w_n \}$ such that $\varphi(\delta(v_i), v_j)=\varphi(\delta(w_i), w_j)=0$ and $\varphi(\delta(v_i), w_j)=\delta_{ij}$.
	\end{enumerate}
\end{lemma}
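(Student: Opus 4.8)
The plan is to reduce the whole statement to the three elementary brackets obtained by specializing~\eqref{eq:bracket-oscillator}: for $x,y\in V$ one has $[\delta,y]=\delta(y)$ and $[x,y]=\varphi(\delta(x),y)\delta^*$, while $\delta^*$ is central. I would record these first, together with the two facts I will lean on repeatedly: $\delta$ is $\varphi$-skew on the regular space $(V,\varphi|_V)$, so that by~\eqref{eq:skew-polinomial} $(\delta^k)^{\mathrm{adj}}=(-\delta)^k$; and $\varphi_\delta$ pairs $\delta$ with $\delta^*$ via $\varphi_\delta(\delta,\delta^*)=1$ by~\eqref{eq:bilinear-invariant}.

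Parts a) and b) I would prove by parallel induction on $k$. Writing a typical generator of $A^{k+1}=[A^k,A]$ coming from $\delta^{k-1}(v)\in\im\delta^{k-1}$ and $t'\delta+y+s'\delta^*\in A$, the basic brackets give it the shape $-t'\delta^{k}(v)+\varphi(\delta^{k}(v),y)\delta^*$; letting $t',y$ vary yields $A^{k+1}=\im\delta^{k}\oplus\mbK\delta^*$, the $\delta^*$-summand being realized exactly because $\delta^k\neq0$ and $\varphi|_V$ is non-degenerate, and the whole space collapsing to $0$ once $\delta^k=0$. Dually, $t\delta+x+s\delta^*\in Z_{k+1}(A)$ forces the $V$-part of every bracket into $\ker\delta^{k}$, i.e. $\delta(x)\in\ker\delta^{k}$ and $t\delta^{k+1}=0$; non-degeneracy again isolates $\delta$ and gives $Z_{k}(A)=\ker\delta^{k}\oplus\mbK\delta^*$, whereas $\delta^k=0$ makes $t$ free so that $Z_k(A)=A$. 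Statement d) then drops out of a): $A^{k+1}=0\iff\delta^k=0$, so $A$ is nilpotent iff $\delta$ is, and the nilpotency index equals that of $\delta$, namely the degree of its minimal polynomial $x^m$.

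For c) I would compute $(Z_k(A))^\perp$ directly from~\eqref{eq:bilinear-invariant}: orthogonality to $\delta^*$ kills the $\delta$-coefficient, and orthogonality to $\ker\delta^{k}\subseteq V$ confines the $V$-part to $(\ker\delta^{k})^{\perp_V}$, so $(Z_k(A))^\perp=(\ker\delta^{k})^{\perp_V}\oplus\mbK\delta^*$. The one genuinely non-formal point is the identity $(\ker\delta^{k})^{\perp_V}=\im\delta^{k}$: since $(\delta^k)^{\mathrm{adj}}=(-\delta)^k$ we get $(\im\delta^{k})^{\perp_V}=\ker\delta^{k}$, and applying $\perp_V$ once more — legitimate as $\varphi|_V$ is non-degenerate — yields the claim. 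Comparing with a) gives $(Z_k(A))^\perp=\im\delta^{k}\oplus\mbK\delta^*=A^{k+1}$. This orthogonality step, where skew-adjointness is essential, is the main obstacle of the first block.

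Finally, assume $\delta\neq0$. Then e) is the comparison $Z(A)=\ker\delta\oplus\mbK\delta^*\subseteq\im\delta\oplus\mbK\delta^*=A^2$, which holds iff $\ker\delta\subseteq\im\delta$. For f), every bracket of elements of $A^2=\im\delta\oplus\mbK\delta^*$ lands in $\mbK\delta^*$ (and exhausts it precisely when $\delta^3\neq0$), so $A^{(2)}\subseteq\mbK\delta^*$; since $\delta^*$ is central, $A^{(3)}=0$ and $A$ is solvable, while $[A^2,A^2]\subseteq\mbK\delta^*\subseteq Z(A)$ makes $(A^2+Z(A))/Z(A)$ abelian. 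For g), $\delta$ being an automorphism gives $\ker\delta=0$ and $\im\delta=V$, hence $A^2=V\oplus\mbK\delta^*$; the form $\psi(x,y):=\varphi(\delta(x),y)$ is alternating (because $\delta$ is skew) and non-degenerate (because $\delta$ is injective and $\varphi|_V$ non-degenerate). A non-degenerate alternating form forces $\dim V=2n$ and supplies a symplectic basis $\{v_1,\dots,v_n,w_1,\dots,w_n\}$ with $\psi(v_i,w_j)=\delta_{ij}$ and $\psi(v_i,v_j)=\psi(w_i,w_j)=0$, which is the asserted basis; then $(A^2)^2=\mbK\delta^*=Z(A^2)$ exhibits $A^2$ as a generalized $\mbK$-Heisenberg algebra. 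Here the only non-routine step is recognizing the Heisenberg/symplectic structure; the rest is bookkeeping on the three basic brackets.
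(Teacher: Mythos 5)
Your proof is correct and follows essentially the same route as the paper's: both establish a) and b) by induction on $k$ using the three elementary brackets and the non-degeneracy of $\varphi$, deduce d) from a), and obtain g) from the symplectic structure on $V$ (the paper phrases this as verifying $Z(A^2)=\mbK\cdot\delta^*=(A^2)^2$ and invoking its earlier discussion of generalized $\mbK$-Heisenberg algebras, which is exactly your symplectic-basis argument for $\psi(x,y)=\varphi(\delta(x),y)$, inlined). In c) the mechanism differs only cosmetically: you compute $(Z_k(A))^\perp$ outright via $(\ker\delta^k)^{\perp_V}=\im\delta^k$, obtained by applying the orthogonal complement twice to $(\im\delta^k)^{\perp_V}=\ker\delta^k$, whereas the paper proves the inclusion $A^{k+1}\subseteq(\ker\delta^k)^\perp\cap(\mbK\cdot\delta^*)^\perp$ and closes it with the rank--nullity dimension count; both hinge on the same skew-adjointness identity~\eqref{eq:power-invarianza}. (Do note the boundary case $\delta^k=0$, not literally covered by your formula, where $(Z_k(A))^\perp=A^\perp=0=A^{k+1}$.)

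One point deserves emphasis: in f) you prove only $A^{(2)}\subseteq\mbK\cdot\delta^*$ and remark parenthetically that equality holds precisely when $\delta^3\neq0$, whereas the paper asserts $A^{(2)}=[A^2,A^2]=\mbK\cdot\delta^*$ for every $\delta\neq0$ without justification. Your parenthetical is the accurate statement: for $a=\delta(x)+s\delta^*$ and $b=\delta(y)+s'\delta^*$ one has $[a,b]=\varphi(\delta^2(x),\delta(y))\,\delta^*=-\varphi(\delta^3(x),y)\,\delta^*$, so $A^{(2)}=0$ whenever $\delta^3=0$ even if $\delta\neq0$. The paper's own Example~\ref{ex:free-nilpotent-quadratic} witnesses this: $\mfn_{2,3}=\mfd(\mbK^3,\varphi_1,\delta_1)$ has $m_{\delta_1}(x)=x^3$ and $[\mfn_{2,3}^2,\mfn_{2,3}^2]=0$, and likewise $\mfn_{3,2}$ is metabelian. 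So on this item your blind proof is more careful than the paper's, and the lemma's equality in f) should carry the hypothesis $\delta^3\neq0$; the consequences actually used downstream ($A^{(3)}=0$, solvability of $A$, and abelianness of $(A^2+Z(A))/Z(A)$) all follow from your containment alone, so nothing else in the paper is affected.
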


\begin{proof}
	Along the proof $W^\perp=\{x\in A: \varphi_\delta(W,x)=0\}$, thus $V^\perp=\spa\langle \delta, \delta^*\rangle$ and $\dim A=\dim W+\dim W^\perp$.

	From equation~\eqref{eq:bracket-oscillator},  $t_0\delta+x_0+s_0\delta^*\in Z(A)$ is equivalently to:
	\begin{equation}
		t_0\delta(y)-t'\delta(x_0) + \varphi(\delta(x_0), y)\delta^*=0,
	\end{equation}
	for all $t'\in \mbK$ and $y\in V$. If $\delta=0$, $A=Z(A)$ and then  $A^2=0$. Otherwise, the centre turns out to be $Z_1(A)=Z(A)=\ker \delta\oplus \mbK \cdot\delta^*$ and $A^2=\im \delta \oplus\spa\langle \varphi(\delta(x),y)\delta^*:  x,y\in V\rangle=\im \delta\oplus \mbK\cdot \delta^*$, the last equality is a consequence of the non-degeneracy of $\varphi$. This proves a) and b) when $k=1$. Assume $\delta^{k-1}\neq 0$ and $A^k=\im \delta^{k-1}\oplus  \mbK\cdot \delta^*$. For the $(k+1)$-term of the descending series, we have:
	\begin{equation}
		A^{k+1}=[A^k,A]=\spa\langle \delta^k(x),\varphi(\delta^k(x),y) \delta^*: x,y\in V\rangle.
	\end{equation}
	If $\delta^{k}=0$, that is, $\delta$ is a nilpotent map, $A^{k+1}=0=A^m$ for $m\geq k+1$, and $A$ is a nilpotent algebra. Otherwise, $\delta^k\neq 0$ and we can take $x\in V$ such that $\delta^k(x)\neq 0$. By the non-degenerancy of $\varphi$, there exists $y\in V$ such that $\varphi(\delta^k(x), y)\neq 0$. Then $A^{k+1}=\im \delta^k \oplus \mbK \cdot \delta^*$. This  proves item a) and the assertion from right to left in item d). Suppose now that $A$ is nilpotent and $k$ is its nilpotency index, i.e., $A^k\neq 0$ and $A^{k+1}=0$. The assumption implies $\delta^{k}=0\neq \delta^{k-1}$, so $k$ is just the degree of the minimum polynomial $m_\delta(x)$ and d) follows.

	Next, for the upper central series, we assume $\delta^{k}\neq 0$ and $Z_k(A)=\ker \delta^{k}\oplus \mbK\cdot \delta^*$. We will check the elements $x\in Z_{k+1}(A)$, so $[A,x]\subseteq Z_k(A)$. Since $\delta^*\in Z_{k+1}(A)$, we can assume $x\in \mbK\cdot \delta \oplus V$. If $\delta^{k+1}=0$, we have $[V,\delta]\subseteq \delta(V)\subseteq \ker \delta^{k}\subseteq Z_k(A)$ and then $Z_{k+1}(A)=A$ because $[V,V]=\mbK\cdot \delta^*$. Let's suppose that $\delta^{k+1}\neq 0$ and $[A, t_0\delta+x_0]\in Z_{k}(A)$. Equivalently, the two following elements are in $Z_k(A)$:
	\begin{equation}
		[\delta, t_0\delta+x_0]=\delta(x_0) \text{\ and \ }[v,  t_0\delta+x_0]=t_0\delta(v)+\varphi(\delta(x_0), v)\delta^*.
	\end{equation}
	That is $\delta(x_0), t_0\delta(v)\in \ker \delta^{k}$ for all $v\in V$. Since $\delta^{k+1}\neq 0$, the only possibility is $t_0=0$ and $x_0\in \ker \delta^{k+1}$. Therefore, $Z_{k+1}(A)=\ker \delta^{k+1}\oplus \mbK\cdot \delta^*$ and b) is proven.

	The statement c) regarding the orthogonality of the terms of the descending and upper central series follows by using $\varphi(\delta^k(x), y)=(-1)^k\varphi(x,\delta^k(y))$, so we have
	\begin{equation}
		A^{k+1}=\mbK\cdot \delta^*\oplus \im \delta^k \subseteq (\ker \delta^k)^\perp \cap (\mbK\cdot \delta^*)^\perp,
	\end{equation}
	the dimension formulae  $\dim A=\dim \ker \delta^k +\dim\, (\ker \delta^k)^\perp$, $\dim V=\dim \ker \delta^k + \dim \im \delta^k$, and the description of the terms in both series given in items a) and b).

	Finally, if $\delta$ is a nonzero map,  $A^2=\im \delta \oplus \mbK \cdot \delta^*$ and $Z(A)=\ker \delta \oplus \mbK \cdot \delta^*$. Therefore $A^{(2)}=[A^2,A^2]=\mbK \cdot \delta^*\subseteq Z(A)$, and items e) and f) follow easily. In the particular case that $\im \delta=V$, we have $A^2=V\oplus\, \mbK \cdot \delta^*$, and looking for $x\in V$ such that $[y, x]=\varphi(\delta(y), x)\delta^*=0$ for all $y\in V$, we get $\varphi(V, x)=0$. This implies $x=0$ from the non-degenerancy of $\varphi$. Then $Z(A^2)=\mbK \cdot \delta^*=A^{(2)}$ and therefore $A^2$ is a generalized $\mbK$-Heisenberg algebra. So there is a standard basis $\{v_1, \dots, v_n, w_1, \dots w_n, \delta^*\}$ such that $V=\spa \langle v_1, \dots, v_n, w_1, \dots w_n\rangle $ and $[v_i, w_j]=\varphi(\delta(v_i), w_j)\delta^*=\delta_{ij}\delta^*$, all other products being zero. Then item g) follows.
\end{proof}

\begin{remark}
	Statement c) in Lemma~\ref{lem:general-oscillator-properties},  that is, the orthogonal terms, $(A_{i+1})^\perp$, of the descending central series give us the upper central terms $Z_i(A)$ and vice versa, is well known for quadratic algebras in characteristic zero \cite{Medina_Revoy_1985}. According to assertion e), generalized $\mbK$-oscillator algebras are reduced if and only if $\delta$ is an automorphism or the length $n$ of any Jordan block $J_n(0)$ of $\delta$ is greater than or equal to $2$.
\end{remark}

\begin{remark}
	Indecomposable real quadratic Lie algebras satisfying that their quotient Lie algebra $\displaystyle\frac{A^2}{Z(A)}$ is abelian have been treated in \cite{Kath_Olbrich_2004}. In this paper, the authors give the classification of real  Lie algebras of maximal isotropic centre of dimension less or equal to $2$. The method they use is the \emph{two-fold extension}. In the next section, we will relate $\mbK$-oscillator algebras and quadratic algebras with maximal isotropic centre of dimension~$1$.
\end{remark}

Our previous Lemma~\ref{lem:general-oscillator-properties} ensures the existence of quadratic nilpotent Lie algebras over fields of characteristic different from $2$, with arbitrary nilpotence index, using one-dimensional-by-abelian double extensions and skew-nilpotent maps. As a corollary of the results in Section~\ref{s:skew-maps}, we can give the complete description of this type of algebras.

\begin{theorem}\label{thm:oscillator-nilpotent}
	Over fields of characteristic not $2$, any nonzero nilpotent generalized $\mbK$-oscillator algebra, $(\mfd(V, \varphi, \delta), \varphi_\delta)$, of nilpotent index $k$ is given  through a $\varphi$-skew nilpotent map $\delta$ such that $m_\delta(x)=x^k$. Even more, the orthogonal space $(V,\varphi)$ decomposes as orthogonal sum of a finite number of $\delta$-invariant regular subspaces, $(W, \varphi|_{W})$ of dimension $2n+1$ or $4m$ with $n\leq \lfloor \frac{1}{2} (k-1)\rfloor$ and $m\leq \lfloor \frac{1}{4}k\rfloor$. And there is a basis of $W$ in which the pair $(\delta|_W, \varphi|_W)$ is expressed by the matrix pair $(A_{\delta|_W}, B_{\varphi|_W})$ as in equation~\eqref{eq:isot-zero-vap} if $W$ has odd dimension and as in equation~\eqref{eq:isot-nonzero-vap} with $\lambda=0$ otherwise.
\end{theorem}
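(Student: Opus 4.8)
The plan is to obtain the statement by combining item d) of Lemma~\ref{lem:general-oscillator-properties} with the zero-eigenvalue part of Theorem~\ref{thm:generalized-vap-decomposition}; almost all of the real work has already been done, so the proof is essentially an assembly of these two results together with a bookkeeping of Jordan-block sizes.

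First I would dispose of the claim about $\delta$. Since $A=\mfd(V,\varphi,\delta)$ is nonzero and nilpotent of index $k$, item d) of Lemma~\ref{lem:general-oscillator-properties} tells us that $\delta$ is a nilpotent endomorphism and that $k$ equals the degree of $m_\delta(x)$. A nilpotent map has $x$ as the only irreducible factor of its minimal polynomial, so necessarily $m_\delta(x)=x^k$, with $\delta^{k-1}\neq 0=\delta^k$ (and $\delta\neq 0$ because $A$ is nonzero).

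Next I would produce the orthogonal decomposition. Because $\delta$ is nilpotent, $V$ coincides with its generalized $0$-eigenspace, $V=V_0=\ker\delta^k$. Hence the analysis of $V_0$ carried out inside the proof of Theorem~\ref{thm:generalized-vap-decomposition} applies to all of $V$: that portion of the argument uses only that $\delta$ is $\varphi$-skew, that $\varphi$ is non-degenerate, and that $\delta|_{V_0}$ is nilpotent, and never the bijectivity of $\delta$. Invoking it with $V=V_0$ yields an orthogonal sum $V=W_1\perp\cdots\perp W_s$ of $\delta$-invariant regular subspaces, where each $W_i$ carries a basis in which $(\delta|_{W_i},\varphi|_{W_i})$ is represented either by the odd-dimensional pair~\eqref{eq:isot-zero-vap} of dimension $2n_i+1$, or by the pair~\eqref{eq:isot-nonzero-vap} with $\lambda=0$ and $n_i=2k_i$, of dimension $4k_i$. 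This is precisely the asserted list of summands, and the regularity of each $W_i$ makes the sum orthogonal.

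Finally I would read off the size constraints. On each summand $\delta|_{W_i}$ is nilpotent of index at most $k$, since $\delta^k=0$ on all of $V$, and this index is visible directly from the canonical matrices: for the odd type it is the length $2n_i+1$ of the single Jordan chain underlying~\eqref{eq:isot-zero-vap} (which, by Remark~\ref{rmk:split-skew}, is a reordering of $J_{2n_i+1}(0)$), while for the even type it is the common size $n_i=2k_i$ of the two blocks $J_{n_i}(0)$ and $-J_{n_i}(0)^t$ appearing in~\eqref{eq:isot-nonzero-vap}. The inequalities $2n_i+1\leq k$ and $2k_i\leq k$ then translate into the stated numerical bounds on $n$ and $m$ by a direct count. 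The only step requiring genuine care is the justification in the previous paragraph, namely that the zero-eigenvalue construction of Theorem~\ref{thm:generalized-vap-decomposition} is independent of $\delta$ being an automorphism; once that is granted, everything else is routine.
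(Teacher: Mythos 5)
Your assembly follows exactly the route the paper intends (its entire proof is the one-line citation of Theorem~\ref{thm:generalized-vap-decomposition} and Lemma~\ref{lem:general-oscillator-properties}), and the two delicate points you isolate are handled correctly: item d) of Lemma~\ref{lem:general-oscillator-properties} does force $m_\delta(x)=x^k$, and your observation that the zero-eigenvalue portion of the proof of Theorem~\ref{thm:generalized-vap-decomposition} uses only $\varphi$-skewness, non-degeneracy, and nilpotency of $\delta$ on $V_0$ --- never injectivity --- is precisely the justification needed here, since that theorem is nominally stated for automorphisms while in the present situation $V=V_0$.

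Your final step, however, is a non sequitur. For an even-type summand, the pair~\eqref{eq:isot-nonzero-vap} with $\lambda=0$ and $n_i=2k_i$ gives $\dim W_i=2n_i=4k_i=4m$, and the nilpotency index of $\delta|_{W_i}$ is the common Jordan block size $n_i=2m$ (the matrix is block-diagonal with \emph{two} chains of length $n_i$, not a single chain of length $2n_i$). Your inequality $2k_i\leq k$ is therefore the correct constraint, but it yields $m\leq\lfloor\frac{1}{2}k\rfloor$, not the stated $m\leq\lfloor\frac{1}{4}k\rfloor$; the ``direct count'' you invoke does not produce the theorem's bound, and no argument can, because that bound is false as printed. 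Concretely, take $V$ of dimension $8$ with $(\delta,\varphi)$ given by~\eqref{eq:isot-nonzero-vap}, $\lambda=0$, $n_1=4$: then $\delta$ is $\varphi$-skew nilpotent with $m_\delta(x)=x^4$, so $k=4$, while $m=2>\lfloor\frac{1}{4}k\rfloor=1$. The paper even contains an internal counterexample: in Example~\ref{ex:free-nilpotent-quadratic}, $\mfn_{3,2}=\mfd(\mbK^4,\varphi_2,\delta_2)$ has $k=2$ and a single even block with $m=1>\lfloor\frac{2}{4}\rfloor=0$. (The bound for $n$ checks out: $2n+1\leq k$ gives exactly $n\leq\lfloor\frac{1}{2}(k-1)\rfloor$.) So the correct even-type bound, and presumably the intended one, is $m\leq\lfloor\frac{1}{2}k\rfloor$; your write-up should have recorded this and flagged the discrepancy with the statement, rather than asserting that your inequality matches it. Everything else in your proposal is sound and is, in expanded form, the argument the paper's one-line proof gestures at.
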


\begin{proof}
	The result follows from Theorem~\ref{thm:generalized-vap-decomposition} and Lemma~\ref{lem:general-oscillator-properties}.
\end{proof}

\begin{example}\label{ex:free-nilpotent-quadratic}
	In characteristic zero, the smallest non-abelian and nilpotent indecomposable quadratic Lie algebras are the \emph{free nilpotent} on $2$-generators $\mfn_{2,3}$ which is $3$-step nilpotent (i.e., $\mfn_{2,3}^2\neq 0=\mfn_{2,3}^3$) and it has dimension $5$, and the $6$-dimensional free nilpotent $\mfn_{3,2}$ on $3$-generators which is $2$-step nilpotent. This couple of algebras are the unique free-nilpotent algebras that admit quadratic structure \cite[Theorem 3.8]{delBarco_Ovando_2012}. They can be obtained as one-dimensional-by-abelian double extensions of the following orthogonal vector spaces:
	\begin{itemize}
		\item $\mfn_{2,3}$ is the double extension $\mfd(\mbK^3, \varphi_1, \delta_1)$ where the orthogonal vector space $(\mbK^3, \varphi_1)$ and the $\varphi_1$-skew map $\delta_1$ are given by means of the canonical form matrix pair ($A_{\delta_1}, B_{\varphi_1}$) as in equation~\eqref{eq:isot-zero-vap-Caalim} with $n_1=1$ and $\mu_1=-1$.
		\item $\mfn_{3,2}$ is the double extension $\mfd(\mbK^4, \varphi_2, \delta_2)$ where the orthogonal vector space $(\mbK^4, \varphi_2)$ and the $\varphi_2$-skew map $\delta_2$ are given by means of the canonical form matrix pair $(A_{\delta_2}, B_{\varphi_2})$ as in equation~\eqref{eq:isot-nonzero-vap} with $\lambda=0$ and $n_1=2$.
	\end{itemize}
\end{example}

The algebras in Example~\ref{ex:free-nilpotent-quadratic} admit other non-degenerate and invariant bilinear forms. In the case of $n_{3,2}$ all of their quadratic structures are isometrically isomorphic. This is clear from Theorem~\ref{thm:generalized-vap-decomposition}. The same is true for $n_{2,3}$ if $\mbK$ is algebraically closed, but not so clear from the previous theorem. If $\mbK=\mbR$, we have two non-isometrically isomorphic quadratic structures: the one given through $\mu_1=-1$ and that corresponding to $\mu_1=1$. This comment leads us to the concept of quadratic dimension.

The \emph{quadratic dimension} \cite{Bajo_Benayadi_1997} of a Lie algebra $L$ is defined as $d_q(L)=\dim B^s_{inv}(L)$ where $B^s_{inv}(L)$ is the subspace of symmetric invariant bilinear forms of $L$. Note that $L$ is quadratic if and only if $d_q(L)\geq 1$. For the quadratic free-nilpotent algebras in Example~\ref{ex:free-nilpotent-quadratic}, $d_q(n_{2,3})=4$ and $d_q(n_{3,2})=7$. In characteristic zero, any quadratic Lie algebra such that $d_q(L)=1$ is simple \cite[Theorem 3.1]{Bajo_Benayadi_1997}, and the converse is also true over algebraically closed fields. The paper \cite{Bajo_Benayadi_2007} is devoted to the structure of quadratic Lie algebras with quadratic dimension $2$. According to \cite[Lemma 3.1]{Bajo_Benayadi_2007} (characteristic zero), any indecomposable Lie algebra  whose quadratic dimension is $2$ is a local Lie algebra.

Let $A=\mfd(V, \varphi, \delta)$ be a generalized $\mbK$-oscillator algebra. Apart from $\varphi_\delta$, the bilinear form $\varphi_{1,0}$,  defined as $\varphi_{1,0}(\delta, \delta)=1$ and $\varphi_{1,0}(V\oplus \mbK \cdot \delta^*, A)=0$, is invariant because of $A^2\subseteq V\oplus \mbK \cdot \delta^*\subseteq A^{\perp_{\varphi_{1,0}}}$. So $d_q(\mfd(V, \varphi, \delta))\geq 2$. Even more, for any $v\in V\backslash \im \delta$, we can split $V=\mbK \cdot v \oplus U$ and $A^2\subseteq \mbK \cdot \delta^*\oplus U$. The bilinear form $T_{v,U}(v,v)=1$ and $T_{v,U}(\mbK \cdot \delta^*\oplus U\oplus \mbK \cdot \delta, A)=0$ is invariant and $T_{v,U}\notin \spa \langle \varphi_{1,0}, \varphi_\delta \rangle$. And we can define a fourth invariant and linearly independent form: $T'_{v,U}(\delta,v)=1$, $\delta, v$ isotropic vectors and $T'_{v,U}(\mbK \cdot \delta^*\oplus U,A)=0$. Thus $d_q(A)\geq 4$ if $\delta(V)\neq V$. In this case, $\ker\delta\neq 0$ and, from b) in Lemma~\ref{lem:general-oscillator-properties}, $Z(A)=\ker \delta \oplus \mbK \cdot \delta^*$ if $\delta\neq 0$, and therefore, $\dim Z(A)\geq 2$. This is a particular case of a more general result.

\begin{lemma}[Tsou, Walker, 1957]\label{lem:dq-dimension}
	Let $(A,\varphi)$ be a quadratic Lie algebra and $r=\dim Z(A)$. Then, $d_q(A)\geq 1+\frac{1}{2}r(r+1)$.
\end{lemma}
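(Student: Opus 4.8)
The plan is to exhibit an explicit $\bigl(1+\tfrac12 r(r+1)\bigr)$-dimensional subspace of $B^s_{inv}(A)$. The starting observation is that, because $\varphi$ is non-degenerate and invariant, the centre and the derived algebra are orthogonal complements of each other: $Z(A)=(A^2)^\perp$. Indeed, if $z\in Z(A)$ then $\varphi(z,[x,y])=-\varphi([x,z],y)=0$, so $Z(A)\subseteq (A^2)^\perp$; conversely, if $a\in (A^2)^\perp$ then $\varphi([x,a],y)=-\varphi(a,[x,y])=0$ for all $y$, whence $[x,a]=0$ for all $x$ by non-degeneracy, i.e. $a\in Z(A)$. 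Non-degeneracy also yields $\dim A^2+\dim (A^2)^\perp=\dim A$, so that $\dim A/A^2=\dim Z(A)=r$.

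Next I would construct invariant symmetric forms supported on the abelian quotient. Writing $\pi\colon A\to A/A^2$ for the canonical projection, I set $\psi_\beta(x,y):=\beta(\pi(x),\pi(y))$ for any bilinear form $\beta$ on the $r$-dimensional space $A/A^2$. Invariance is then automatic and requires no computation: both summands $\psi_\beta([x,y],z)$ and $\psi_\beta(y,[x,z])$ in \eqref{eq:invariant} vanish, since $[x,y],[x,z]\in A^2=\ker\pi$. Moreover $\psi_\beta$ is symmetric exactly when $\beta$ is, and $\beta\mapsto\psi_\beta$ is injective because $\pi$ is surjective. Hence the pull-backs of symmetric forms on $A/A^2$ form a subspace $W\subseteq B^s_{inv}(A)$ with $\dim W=\tfrac12 r(r+1)$, the dimension of the space of symmetric bilinear forms on an $r$-dimensional vector space.

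Finally I would account for the extra summand $1$ by checking that $\varphi$ itself is not one of these pull-backs. Every $\psi_\beta\in W$ vanishes on $A^2\times A$, since $\pi(A^2)=0$, so each form in $W$ has radical containing $A^2$; by contrast $\varphi$ is non-degenerate, and for a non-abelian $A$ we have $A^2\neq 0$, so $\varphi\notin W$. Therefore $\mbK\cdot\varphi\oplus W\subseteq B^s_{inv}(A)$, and consequently $d_q(A)=\dim B^s_{inv}(A)\geq 1+\tfrac12 r(r+1)$.

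The only genuinely delicate point is this last independence step: it is precisely where the non-degeneracy of $\varphi$ is played against the degeneracy forced on the centre-supported forms, and it is the reason the isolated $1$ appears in the bound. Everything else — identifying $\dim A/A^2$ with $r$ through $Z(A)=(A^2)^\perp$, and the cost-free invariance of the pull-back forms — is routine once that orthogonality is established.
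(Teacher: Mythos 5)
Your proof is correct and is essentially the paper's own argument in coordinate-free form: the paper chooses a complement $W$ of $A^2$ with basis $w_1,\dots,w_r$ and defines symmetric forms $T_{i,j}$ extended by $T_{i,j}(A^2,A)=0$, which is exactly your space of pull-backs of symmetric forms along $A\to A/A^2$, with the same use of $Z(A)=(A^2)^\perp$ to identify $\dim A/A^2=r$ and the same degenerate-versus-non-degenerate contrast to separate these forms from $\varphi$. One point in your favour: the non-abelian hypothesis you invoke in the final step is genuinely needed (for abelian $A$ the stated bound fails, e.g.\ $d_q(\mbK)=1<2$), whereas the paper's proof relies on it silently when asserting that the generators are linearly independent.
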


\begin{proof}
	Let $W$ be a complement subspace  of the subspace $A^2$ in $A$ and $w_1, \dots, w_d$ a basis of W. Then $A=W\oplus A^2$ and $d=\dim A-\dim A^2=r=\dim Z(A)$ because $(A^2)^\perp=Z(A)$. For each pair $(i,j)$, define the symmetric bilinear form on $W$  as $T_{i,j}(w_i, w_j)=1=T_{i,j}(w_j, w_i)$ and $T_{i,j}(w_k, w_s)=0$ for $(k,s)\neq (i,j)$. We extend $T_{i,j}$ to a symmetric form in $A$ by defining $T_{i,j}(A^2, A)=0$. We have that  $T_{i,j}$ is invariant because $A^2\subset A^{\perp_{T_{i,j}}}$. So, the vector space $\spa \langle T_{i,j}, \varphi: 1\leq i\leq j\leq d\rangle\subseteq B_{inv}^s(A)$ and, as the generator forms are linearly independent, the result follows.
\end{proof}

We recall that a Lie algebra is local if it has a unique maximal ideal. We note that, in the case of quadratic Lie algebras, $I$ is a maximal ideal if and only if $I^\perp$ is minimal. Therefore, having a unique maximal ideal is equivalent to having a unique minimal ideal in the class of quadratic algebras.

\begin{lemma}\label{lem:local-general-oscillator} Let $A=\mfd(V, \varphi, \delta)$ be a generalized $\mbK$-oscillator Lie algebra. The following assertions are equivalent:
	\begin{enumerate}[a)]
		\item $A$ is local.
		\item $A=A^2\oplus \mbK \cdot \delta$ and $\delta\neq 0$.
		\item $Z(A)$ is one dimensional.
		\item $\delta$ is an automorphism.
		\item The quadratic dimension of $A$ is 2.
	\end{enumerate}
	In this case, $A^2$ is a generalized $\mbK$-Heisenberg algebra. And the set of invariant symmetric forms is the vector space $B^s_{inv}(A)=\spa\langle \varphi_{1,0}, \varphi_\delta\rangle$ where $\varphi_{1,0}(A^2, A)=0$ and $\varphi_{1,0}(d,d)=1$.
\end{lemma}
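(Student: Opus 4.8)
The plan is to prove the equivalence by establishing a cycle of implications and then treating the final computation of $B^s_{inv}(A)$ separately. First I would handle the equivalence of the four structural conditions b), c), d), and the locality condition a), since these follow rather directly from Lemma~\ref{lem:general-oscillator-properties}. The key observation is that by item b) of that lemma (with $k=1$), for $\delta\neq 0$ we have $Z(A)=\ker\delta\oplus\mbK\cdot\delta^*$, so $\dim Z(A)=\dim\ker\delta+1$. Hence $Z(A)$ is one-dimensional precisely when $\ker\delta=0$, i.e.\ when $\delta$ is an automorphism (using that $\delta$ is a skew map on a finite-dimensional space, so injective forces bijective). This immediately gives c) $\Leftrightarrow$ d). For d) $\Rightarrow$ b), when $\delta$ is an automorphism, $\im\delta=V$ and item a) of the lemma gives $A^2=V\oplus\mbK\cdot\delta^*$, whence $A=\mbK\cdot\delta\oplus A^2$ is a direct sum; and conversely b) forces $\dim A^2=\dim A-1$, so $\dim\im\delta=\dim V$, i.e.\ $\delta$ is onto, giving b) $\Leftrightarrow$ d).

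Next I would prove a) $\Leftrightarrow$ c) using the duality between maximal and minimal ideals recorded just before the statement. In a quadratic algebra, minimal ideals that are contained in the radical of nilpotency correspond to one-dimensional central subspaces; more precisely, any minimal ideal $I$ of a solvable quadratic Lie algebra satisfies $[A,I]\subseteq I$, and minimality together with $[A,[A,I]]\subseteq\cdots$ descending forces $I\subseteq Z(A)$ to be a line. Thus locality (uniqueness of the minimal ideal) is equivalent to $Z(A)$ containing a unique line, and since $Z(A)$ is abelian and every line in it is an ideal, this happens exactly when $\dim Z(A)=1$, giving a) $\Leftrightarrow$ c). The implication d) $\Rightarrow$ e) I would get from the earlier discussion in the text: when $\delta(V)=V$ the extra forms $T_{v,U}$, $T'_{v,U}$ degenerate away, so only $\varphi_\delta$ and $\varphi_{1,0}$ survive; conversely for e) $\Rightarrow$ c) I would invoke Lemma~\ref{lem:dq-dimension}, which gives $d_q(A)\geq 1+\tfrac12 r(r+1)$ with $r=\dim Z(A)$, so $d_q(A)=2$ forces $r=1$.

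The main computational content, and what I expect to be the genuine obstacle, is the final claim that when $\delta$ is an automorphism the space of invariant symmetric forms is exactly $\spa\langle\varphi_{1,0},\varphi_\delta\rangle$, i.e.\ that $d_q(A)=2$ and not larger. The inequality $d_q(A)\geq 2$ is already clear, so the work is the upper bound $d_q(A)\leq 2$. I would take an arbitrary invariant symmetric form $\psi$ and exploit invariance $\psi([x,y],z)+\psi(y,[x,z])=0$ together with the Heisenberg structure of $A^2=V\oplus\mbK\cdot\delta^*$ from item g) of Lemma~\ref{lem:general-oscillator-properties}. Writing $A=\mbK\cdot\delta\oplus V\oplus\mbK\cdot\delta^*$, invariance applied with the bracket relations $[\delta,v]=\delta(v)$, $[v,w]=\varphi(\delta(v),w)\delta^*$ forces $\psi(\delta^*,A^2)=0$, pins down $\psi$ on $V\times V$ to be a scalar multiple of $\varphi$ (using that $\delta$ is onto and skew, so $\varphi(\delta(\cdot),\cdot)$ being nondegenerate rigidifies the form), and determines the $\delta$-$\delta^*$ and $\delta$-$V$ entries up to the two free parameters. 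Carefully tracking these constraints should collapse the space of solutions to the two-dimensional span claimed, with $\psi(\delta,\delta)$ and the $\varphi_\delta$-coefficient as the two remaining degrees of freedom.
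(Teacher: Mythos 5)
Most of your outline tracks the paper's route: the equivalences c) $\Leftrightarrow$ d) via $Z(A)=\ker\delta\oplus\mbK\cdot\delta^*$, b) $\Leftrightarrow$ d) by the dimension count with $A^2=\im\delta\oplus\mbK\cdot\delta^*$, and e) $\Rightarrow$ ($\dim Z(A)=1$) via Lemma~\ref{lem:dq-dimension} are all correct and are essentially what the paper does. Your sketch of d) $\Rightarrow$ e) --- reducing an arbitrary invariant symmetric $\psi$ by writing every element of $A^2$ as a bracket $x=[\delta,\delta^{-1}(x)]$ --- is exactly the paper's computation (which yields $\psi(\delta,x)=\psi(\delta^*,x)=0$ for $x\in V$, then $\psi(\delta^*,\delta^*)=0$ from $\delta^*=k_0^{-1}[\delta^{-1}(x_0),y_0]$, and finally $\psi(x,y)=\beta\varphi_\delta(x,y)$ with $\beta=\psi(\delta,\delta^*)$), though you leave the execution to ``carefully tracking these constraints''; that part is completable as sketched.

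The genuine gap is in your a) $\Leftrightarrow$ c) step, specifically the direction c) $\Rightarrow$ a). You assert that in a solvable quadratic Lie algebra every minimal ideal is central, justified by ``minimality together with $[A,[A,I]]\subseteq\cdots$ descending forces $I\subseteq Z(A)$''. That descent is the Engel argument and is valid only when the adjoint action is nilpotent, which is precisely not the case here: by item d) of Lemma~\ref{lem:general-oscillator-properties}, $A$ is nilpotent iff $\delta$ is nilpotent, and in the situation at hand $\delta$ is an automorphism. For a minimal ideal $I$, $[A,I]$ is an ideal contained in $I$, so minimality gives only $[A,I]=0$ or $[A,I]=I$, and in the second case the chain is constant --- solvability alone gives no contradiction (the two-dimensional nonabelian solvable algebra with $[x,y]=y$ has the non-central minimal ideal $\mbK\cdot y$; it fails to be quadratic, which is exactly the point: the invariant form must be used). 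The paper supplies the missing argument in its e) $\Rightarrow$ a) step: if $I$ is a minimal ideal with $I\neq Z(A)$, then $I\cap Z(A)=0$, so $A=I^\perp+A^2$; since $[I,I^\perp]=0$ by invariance, $0\neq[A,I]=[A^2,I]\subseteq A^2\cap I$, whence $I\subseteq A^2$ by minimality; picking $0\neq v+t\delta^*\in I$ with $v\in V$ and bracketing with $w\in V$ gives $\varphi(\delta(v),w)\delta^*\in I\cap\mbK\cdot\delta^*=0$ for all $w$, forcing $\varphi(\delta(v),V)=0$, which contradicts the non-degeneracy of $\varphi$ and $\delta$ being an automorphism. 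Without this (or some equivalent use of invariance of $\varphi$), nothing in your proposal establishes locality: your cycle then proves a) $\Rightarrow$ c) $\Rightarrow$ d) $\Rightarrow$ b), e), but no condition implies a), so the equivalence does not close.
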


\begin{proof}
	Any subspace $U$ containing $A^2$ is an ideal and any subspace of $Z(A)$ is an ideal. Assume firstly $A$ is local. As $\dim A\geq 4$,  $A$ is not abelian, so $\delta\neq 0$ and $Z(A)=\ker \delta \oplus \mbK \cdot \delta^*$. Since there is only one minimal ideal,  $\ker \delta=0$, so $\im \delta= V$, $A^2= V\oplus \mbK \cdot \delta^*$ and b) follows. From b) and $A^2=(Z(A))^\perp$, we have  $\dim A=\dim A^2+ \dim Z(A)$ and we get c). If $Z(A)=\mbK \cdot \delta^*\neq A$, using Lemma~\ref{lem:general-oscillator-properties} we have $\delta \neq 0$ and $Z(A)=\ker \delta\oplus \mbK \delta^*$, so $\ker \delta=0$ and $\delta$ is an automorphism. Then c) implies d). Now we will prove the final comment and $d)\Rightarrow e)$. From Lemma~\ref{lem:dq-dimension}, $d_q(A) \geq 2$. In fact, $\spa \langle\varphi_\delta,\varphi_{1,0}\rangle\subseteq B^s_{inv}(A)$. Let $\psi \in B^s_{inv}(A)$ and set $\alpha:=\psi(\delta, \delta)$  and $\beta:=\psi(\delta, \delta^*)$. Note that $\psi(\delta, x)=\psi(\delta^*, x)=0$ thanks to the invariance of $\psi$ and $x=[\delta, \delta^{-1}(x)]$. Consider $x_0,y_0\in V$ such that $0\neq k_0=\varphi (x_0, y_0)=\varphi(\delta(\delta^{-1}(x_0)), y_0)$. Then
	\[
		0=\psi([\delta^{-1}x_0,y_0], \delta^*)=\psi(k_0\delta^*,\delta^*)=k_0\psi(\delta^*, \delta^*).
	\]
	So, $\psi(\delta^*, \delta^*)=0$. Now, for any $x,y \in A$ and $y\neq 0$,
	\[
		\psi (x,y)=\psi(x, [\delta,\delta^{-1}(y)])=\psi([\delta^{-1}(y), x],\delta)=\psi(\varphi(y,x)\delta^*, \delta).
	\]
	Then, $\varphi(x,y)=\varphi(y,x)\psi(\delta^*,\delta)=\beta\varphi_\delta(x,y)$ and the equality also holds for $y=0$. In this way, we have that $\psi=\alpha \varphi_{1,0}+\beta\varphi_\delta$. Therefore, $d_q(A)=2$  and assertion e) follows. Assume finally e). From Lemma~\ref{lem:dq-dimension}, $r=\dim Z(A)=1$ so $Z(A)$ is a minimal ideal and therefore $\delta$ is an automorphism and $A^2$ is a maximal ideal. Let $I$ be a minimal ideal different from $Z(A)$, then $I\cap Z(A)=0$ and $A=I^\perp +A^2$. As $I^\perp$ is an ideal, applying the non-degenerancy of $\varphi_\delta$, we have $[I,I^\perp]=0$ and therefore $0\neq[A,I]=[A^2,I]\subseteq A^2\cap I \subseteq I$. The minimality of $I$ implies $I= A^2\cap I \subseteq A^2$ and there is a $0\neq v\in V$ such that $v\oplus t\delta^*\in I$, $t\in \mbK$. For any $w\in V$, we have $[v+ t\delta^*, w]=\varphi(\delta(v), w)\delta^* \in I$. Our assumption implies $\varphi(\delta(v), w)=0$ for all $w\in V$, a contradiction because $\varphi$ is non-degenerate and $\delta$ is an automorphism. This proves that $Z(A)$ is the unique minimal ideal and therefore $A$ is a local Lie algebra.
\end{proof}

\begin{remark}
	In characteristic zero, \cite[Theorem 3.1]{Bajo_Benayadi_2007} offers a characterization of local algebras that includes the class of $\mathbb{K}$-oscillators. The main goal in this paper is to provide examples and characterizations of algebras whose quadratic dimension is $2$. In fact, our proof of $d)\Rightarrow e)$  is the one presented in Proposition 4.1. This proposition asserts that the result is true for a double extension of any quadratic Lie algebra by any skew-symmetric derivation.
\end{remark}

In the sequel, we will tackle the case of isomorphisms and isometric isomorphisms in the subclass of $\mbK$-oscillator algebras. A similar result appears in \cite[Proposition 2.11]{Favre_Santharoubane_1987}. According to Definition~\ref{def:one-by-abelian-extension}, $\mbK$-oscillator algebras are one-dimensional-by-abelian double extensions of orthogonal subspaces through skew-automorphisms.

\begin{theorem}\label{thm:isomorphism-isometric-iso}
	Let $A_i=\mfd(V_i, \varphi_i,\delta_i)$  be two $\mbK$-oscillator algebras. Then, $A_1$ and $A_2$ are isomorphic if and only if there exists an isomorphism $f\colon V_1\to V_2$ and scalars $\lambda,\,\mu\in\mbK$ with $\lambda\mu\neq 0$ such that:
	\begin{enumerate}[\quad a)]
		\item $\delta_1=\mu f^{-1}\delta_2 f$.
		\item $\lambda\mu\varphi_1=\varphi_2(f(\cdot), f(\cdot))$.
	\end{enumerate}
	Moreover, they are isometrically isomorphic if and only if the previous conditions a) and b) stand with $\lambda\mu=1$.

	In addition, any isomorphism $F\colon A_1 \to A_2$ is completely determined the by the $5$-tuple $(f,z,\lambda, \mu, \nu)$ where $f\colon V_1\to V_2$ satisfies a) and b), $\lambda, \mu \in \mbK^\times$, $z\in V_1$, and $\nu \in \mbK$ in the following way:
	\[\begin{split}
			F(\delta_1) & =\mu\delta_2+z+\nu\delta_2^*, \\ F(x)&=f(x)+\varphi_2(\delta_2(z),f(\delta_1^{-1}(x)))\delta_2^*,\, x\in V_{2n},\\ F(\delta_1^*)&=\lambda\delta_2^*.
		\end{split}\]
	And $F$ will be an isometry if and only if $\lambda\mu=1$,
	\begin{equation}\label{eq:F-isometry}
		\mu\varphi_2(\delta_2(z),f(\delta_1^{-1}(x)))+\varphi_2(z,f(x))=0\text{\ and\ \,}2\mu\nu+\varphi_2(z,z)=0.
	\end{equation}
	(The last conditions are fulfilled by, for instance, setting $\nu=0, z=0$.)
\end{theorem}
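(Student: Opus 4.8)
The plan is to read off the data $(f,\lambda,\mu)$ from the structural invariants that any isomorphism must preserve, then to verify sufficiency and the isometry conditions by direct computation. Because each $\delta_i$ is an automorphism, Lemma~\ref{lem:local-general-oscillator} gives $Z(A_i)=\mbK\cdot\delta_i^*$, $A_i^2=V_i\oplus\mbK\cdot\delta_i^*$ and $A_i=A_i^2\oplus\mbK\cdot\delta_i$. Any isomorphism $F\colon A_1\to A_2$ carries centre to centre and derived algebra to derived algebra, so $F(\delta_1^*)=\lambda\delta_2^*$ for some $\lambda\in\mbK^\times$ and $F(A_1^2)=A_2^2$. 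I then write $F(\delta_1)=\mu\delta_2+z+\nu\delta_2^*$ with $z\in V_2$, $\mu,\nu\in\mbK$, and $F(x)=g(x)+h(x)\delta_2^*$ for $x\in V_1$, where $g\colon V_1\to V_2$ and $h\colon V_1\to\mbK$ are linear. Surjectivity of $F$ together with $F(A_1^2)=A_2^2$ forces $\mu\neq0$, and $g$ is an isomorphism since it is the map induced by $F$ from $A_1^2/Z(A_1)\cong V_1$ onto $A_2^2/Z(A_2)\cong V_2$.

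Next I extract the relations from the homomorphism property applied to the two essential brackets of equation~\eqref{eq:bracket-oscillator}. Comparing $F([\delta_1,x])=F(\delta_1(x))$ with $[F(\delta_1),F(x)]$ and splitting into $V_2$- and $\delta_2^*$-parts yields $g\delta_1=\mu\delta_2 g$, i.e. condition a) with $f=g$, together with $h(\delta_1(x))=\varphi_2(\delta_2(z),g(x))$; replacing $x$ by $\delta_1^{-1}(x)$ gives the closed form $h(x)=\varphi_2(\delta_2(z),f(\delta_1^{-1}(x)))$. Comparing $F([x,y])=\lambda\varphi_1(\delta_1(x),y)\delta_2^*$ with $[F(x),F(y)]=\varphi_2(\delta_2(g(x)),g(y))\delta_2^*$ gives $\lambda\varphi_1(\delta_1(x),y)=\varphi_2(\delta_2(f(x)),f(y))$; substituting $\delta_2 f=\tfrac1\mu f\delta_1$ from a) and letting $x$ range over $V_1$ (so $\delta_1(x)$ does too, as $\delta_1$ is bijective) produces condition b), $\lambda\mu\varphi_1=\varphi_2(f(\cdot),f(\cdot))$. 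This proves necessity and, at the same time, realises the parametrisation by $(f,z,\lambda,\mu,\nu)$.

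For sufficiency I run the computation backwards: given $f,\lambda,\mu$ with $\lambda\mu\neq0$ satisfying a) and b), I define $F$ by the displayed formulas (any $z,\nu$ being admissible) and verify it is a homomorphism. As $\delta_1^*,\delta_2^*$ are central, only the brackets $[\delta_1,x]$ and $[x,y]$ need checking; the first follows from a) and the second from a) and b), exactly reversing the previous paragraph, while bijectivity is immediate from $f$ being an isomorphism and $\lambda\mu\neq0$. For the isometric statement I evaluate $\varphi_{\delta_2}(F(a),F(b))$ through equation~\eqref{eq:bilinear-invariant} on generator pairs and match with $\varphi_{\delta_1}(a,b)$: the pair $(x,y)$ in $V_1$ forces $\lambda\mu=1$, the pair $(\delta_1,x)$ gives $\mu\varphi_2(\delta_2(z),f(\delta_1^{-1}(x)))+\varphi_2(z,f(x))=0$, and $(\delta_1,\delta_1)$ gives $2\mu\nu+\varphi_2(z,z)=0$, all other pairs vanishing on both sides. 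These are exactly the conditions in~\eqref{eq:F-isometry}, and $z=0$, $\nu=0$ shows solvability whenever $\lambda\mu=1$.

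The routine computations are lengthy but forced. The points I would treat most carefully are the structural normalisations at the outset—showing $\lambda,\mu\in\mbK^\times$ and that $g$ is invertible from nothing more than $F$ being a bijective homomorphism—and the repeated use of $\delta_1$ being an automorphism, both when passing from the relation in $\delta_1(x)$ to condition b) and when inverting to obtain the closed form of $h$. I expect the main obstacle to be the bookkeeping in the isometry step, where the mixed $\delta_2$-, $V_2$- and $\delta_2^*$-components of $F(\delta_1)$ must be tracked through $\varphi_{\delta_2}$; organising the verification pair-by-pair on generators is what keeps this manageable.
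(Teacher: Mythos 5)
Your proposal is correct and follows essentially the same route as the paper's proof: you read $\lambda$, $\mu$, $f$ and the $\delta_2^*$-component off the preserved centre, derived algebra and complement, extract a) and b) from the brackets $[\delta_1,x]$ and $[x,y]$ using that $\delta_1$ is an automorphism, then reverse the computation for sufficiency and check the isometry pairwise on generators, exactly as the paper does (your quotient argument $A_1^2/Z(A_1)\to A_2^2/Z(A_2)$ for invertibility of $f$ is a cosmetic variant of the paper's direct surjectivity check). One trivial slip in phrasing: the pair $(\delta_1,\delta_1^*)$ does not vanish on both sides but evaluates to $1=\lambda\mu$, which is automatically consistent once $\lambda\mu=1$ has been forced by the pair $(x,y)$.
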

\begin{proof} Assume firstly $A_1$ and $A_2$ are isomorphic. The centre of both algebras is one-dimensional and any isomorphism $F\colon A_1\to A_2$ induces an isomorphism of the centres, $F\mid_{Z(A_1)}\colon Z(A_1)\to Z(A_2)$. Therefore, there exists $\lambda\neq 0$ such that $F(\delta_1^*)=\lambda\delta_2^*$. For the derived algebras $A_i^2$, we also have the restricted isomorphism $F\mid_{A_1^2}\colon A_1^2\to A_2^2$, that acts as $F(x)=f(x)+g(x)\delta_2^*$ over the elements $x\in V_1$, where $f\colon V_{1}\to V_{2}$ and $g\colon V_{1}\to\mbK$ are linear maps. Furthermore, $f$ must be an isomorphism since it is surjective: take $y\in V_2$, there exists $x+k\delta_1^*\in A_1^2$ such that
	\[
		y=F(x+k\delta_1^*)=f(x)+(g(x)+k\lambda)\delta_2^*,
	\]
	impliying $f(x)=y$. Finally, we have that $F(\delta_1)=\mu\delta_2+z+\nu\delta_2^*$ with $\mu\neq 0$, because otherwise, we would have $F(A_1)\subset A_2^2\neq A_2$.

	With this, we conclude that there are scalars $\lambda$, $\mu$, $\nu\in\mbK$ with $\lambda\mu\neq 0$, an isomorphism $f\colon V_1\to V_1$, a linear map $g\colon V_1\to\mbK$ and $z\in V_2$ such that, for any $x\in V_1,$
	\[F(\delta_1)=\mu\delta_2+z+\nu\delta_2^*;\quad F(x)=f(x)+g(x)\delta_2^*;\quad F(\delta_1^*)=\lambda\delta_2^*.\]
	Since $F$ is an isomorphism, $F([\delta_1,x]_1)=[F(\delta_1),F(x)]_2$ for $x\in V_1$, so
	\[\begin{split}
			F(\delta_1(x))                          & =[\mu\delta_2+z+\nu\delta_2^*,f(x)+g(x)\delta_2^*]_2      \\
			f(\delta_1(x))+g(\delta_1(x))\delta_2^* & =\mu\delta_2(f(x))+\varphi_2(\delta_2(z),f(x))\delta_2^*.
		\end{split}\]
	Thus, for any $x\in V_1$, we have that $(f\circ\delta_1)(x)=(\mu\delta_2\circ f)(x)$, so condition a) follows, and $g(x)=\varphi_2(\delta_2(z),f(\delta_1^{-1}(x)))$. Taking $a\in V_1$ such that $\delta_1(a)=x$ and using again that $F$ is an isomorphism, we also have that
	\[\begin{split}
			F([a,y]_1)                            & =[F(a),F(y)]_2                                \\
			F(\varphi_1(\delta_1(a),y)\delta_1^*) & =[f(a)+g(a)\delta_2^*,f(y)+g(y)\delta_2^*]_2  \\
			\lambda\varphi_1(x,y)\delta_2^*       & =\varphi_2(\delta_2(f(a)),f(y))\delta_2^*     \\
			\lambda\varphi_1(x,y)\delta_2^*       & =\varphi_2(f(\delta_1(a))/\mu,f(y))\delta_2^* \\
			\lambda\varphi_1(x,y)\delta_2^*       & =\frac{1}{\mu}\varphi_2(f(x),f(y))\delta_2^*
		\end{split}\]
	Therefore, $\lambda\mu\varphi_1(x,y)=\varphi_2(f(x),f(y))$ for $x,y\in V_{2n}$, proving condition b). The isomorphism is defined by
	\[\begin{split}
			F(\delta_1) & =\mu\delta_2+z+\nu\delta_2^*, \\ F(x)&=f(x)+\varphi_2(\delta_2(z),f(\delta_1^{-1}(x)))\delta_2^*\; x\in V_{2n},\\ F(\delta_1^*)&=\lambda\delta_2^*.
		\end{split}\]

	If we additionally suppose that $F$ is an isometry, we have that
	\[
		\begin{split}
			\varphi_{1\delta_1}(\delta_1,\delta_1^*) & =\varphi_{2\delta_2}(F(\delta_1),F(\delta_1^*))                                                           \\
			1                                        & =\varphi_{2\delta_2}(\mu\delta_2+z+\nu\delta_2^*,\lambda\delta_2^*)                                       \\
			1                                        & =\lambda\mu,                                                                                              \\
			\varphi_{1\delta_1}(\delta_1,x)          & =\varphi_{2\delta_2}(F(\delta_1),F(x))                                                                    \\
			0                                        & =\varphi_{2\delta_2}(\mu\delta_2+z+\nu\delta_2^*,f(x)+\varphi_2(\delta_2(z),f(\delta^{-1}(x)))\delta^*_2) \\
			0                                        & =\mu\varphi_2(\delta_2(z),f(\delta_1^{-1}(x)))+\varphi_2(z,f(x)),                                         \\
			\varphi_{1\delta_1}(\delta_1,\delta_1)   & =\varphi_{2\delta_2}(F(\delta_1),F(\delta_1^*))                                                           \\
			0                                        & =\varphi_{2\delta_2}(\mu\delta_2+z+\nu\delta_2^*,\mu\delta_2+z+\nu\delta_2^*)                             \\
			0                                        & =2\mu\nu+\varphi_2(z,z).
		\end{split}
	\]
	resulting $\lambda\mu=1$ and satisfying the two conditions in equation~\eqref{eq:F-isometry}.

	For the converse implication, assuming that a) and b) hold, take any $z\in V_2$ and $\nu\in\mbK$, and define $F\colon A_1\to A_2$ by
	\[\begin{split}
			F(\delta_1) & =\mu\delta_2+z+\nu\delta_2^*,                                                    \\
			F(x)        & =f(x)+\varphi_2(\delta_2(z),f(\delta_1^{-1}(x)))\delta_2^*\text{\ if\ }x\in V_1, \\ F(\delta_1^*)&=\lambda\delta_2^*,
		\end{split}\]
	and extend it by linearity. This way, the map is linear and bijective. To prove that $F$ is an isomorphism, we just have to check $F([a,b]_1)=[F(a), F(b)]_2$ for the brackets $[\delta_1,\delta_1^*]_1$, $[\delta_1, x]_1$, $[x,y]_1$, $[x,\delta_1^*]_1$, $x,y\in V_1$.
	\[\begin{split}
			F([c,\delta_1^*]_1) & =F(0)=0=[F(a),\lambda\delta_2^*]_2=[F(a),F(\delta_1^*)]_2,\text{\ for any \ }c\in A_1                                                  \\
			F([\delta_1,x]_1)   & =F(\delta_1(x))=f(\delta_1(x))+\varphi_2(\delta_2(z),f(x))\delta_2^*                                                                   \\
			                    & =\mu\delta_2(f(x))+\varphi_2(\delta_2(z),f(x))\delta_2^*                                                                               \\
			                    & =[\mu\delta_2+z+\nu\delta_2^*,f(x)+\varphi_2(\delta_2(z),f(\delta_1^{-1}(x)))\delta_2^*]_2                                             \\
			                    & =[F(\delta_1),F(x)]_2,                                                                                                                 \\
			F([x,y]_1)          & =F(\varphi_1(\delta_1(x),y)\delta_1^*)=\lambda\varphi_1(\delta_1(x),y)\delta_2^*=\frac{1}{\mu}\varphi_2(f(\delta_1(x)),f(y))\delta_2^* \\
			                    & =\varphi_2(\delta_2(f(x)),f(y))\delta_2^*                                                                                              \\
			                    & =[f(x)+\varphi_2(\delta_2(z),f(\delta_1^{-1}(x)))\delta_2^*,f(y)+\varphi_2(\delta_2(z),f(\delta_1^{-1}(y)))\delta_2^*]_2               \\
			                    & =[F(x),F(y)]_2.
		\end{split}\]
	If we also assume $\lambda\mu=1$ and
	\[\mu\varphi_2(\delta_2(z),f(\delta_1^{-1}(x)))+\varphi_2(z,f(x))=0,\quad 2\mu\nu+\varphi_2(z,z)=0,\]
	then we have that:
	\[\begin{split}
			\varphi_{1\delta_1}(\delta_1,\delta_1^*)=1    & =\lambda\mu=\varphi_{2\delta_2}(\mu\delta_2+z+\nu\delta_2^*,\lambda\delta_2^*)=\varphi_{2\delta_2}(F(\delta_1),F(\delta_1^*)), \\
			\varphi_{1\delta_1}(x+k\delta_1^*,\delta_1^*) & =0=\varphi_{2\delta_2}(f(x)+\varphi_2(\delta_2(z),f(\delta_1^{-1}(x)))\delta_2^*+k\lambda\delta_2^*,\lambda\delta_2^*)         \\
			                                              & =\varphi_{2\delta_2}(F(x+k\delta_1^*),F(\delta_1^*)),                                                                          \\
			\varphi_{1\delta_1}(x,y)                      & =\varphi_1(x,y)=\frac{1}{\lambda\mu}\varphi_2(f(x),f(y))                                                                       \\
			=\varphi_{2\delta_2}(f(x)                     & +\varphi_2(\delta_2(z),f(\delta_1^{-1}(x)))\delta_2^*,f(y)+\varphi_2(\delta_2(z),f(\delta_1^{-1}(y)))\delta_2^*)               \\
			                                              & =\varphi_{2\delta_2}(F(x),F(y)),                                                                                               \\
			\varphi_{1\delta_1}(\delta_1,x)               & =0=\mu\varphi_2(\delta_2(z),f(\delta_1^{-1}(x)))+\varphi_2(z,f(x))                                                             \\
			                                              & =\varphi_{2\delta_2}(\mu\delta_2+z+\nu\delta_2^*,f(x)+\varphi_2(\delta_2(z),f(\delta_1^{-1}(x)))\delta_2^*)                    \\
			                                              & =\varphi_{2\delta_2}(F(\delta_1),F(x)),                                                                                        \\
			\varphi_{1\delta_1}(\delta_1,\delta_1)        & =0=2\mu\nu+\varphi_2(z,z)=\varphi_{2\delta_2}(\mu\delta_2+z+\nu\delta_2^*,\mu\delta_2+z+\nu\delta_2^*)                         \\
			                                              & =\varphi_{2\delta_2}(F(\delta_1),F(\delta_1)).
		\end{split}\]
	This proves that, in this case, $F$ is also an isometry.
\end{proof}


\section{Isomaximality and Lorentzian algebras}\label{s:isolorentz}

Along this section $\mbK$ is a field of characteristic zero.

According to \cite[Definition II.3.16]{Hilgert_Hofmann_Lawson_1989}, a Lorentzian Lie algebra is a pair $(L, \varphi)$ with a real Lie algebra $L$ and $\varphi$ an invariant and non-degenerate Lorentzian form, i.e., a real invariant symmetric bilinear form with signature $(p,1)$ where $p$ is the number of positive eigenvalues and $q$ is the number of negative eigenvalues. Section 6 of Chapter II in \cite{Hilgert_Hofmann_Lawson_1989} focuses on these algebras and explores the study of Lie semialgebras in them. The section points out that the complete classification of Lorentzian algebras is reduced to the indecomposable (named as irreducible by the authors) subclass, see \cite[Remark II.6.1]{Hilgert_Hofmann_Lawson_1989}. And the classification of this subclass is fully covered by Theorem II.6.14. Throughout this final section, we will prove that, over any arbitrary field $\mbK$ of characteristic zero, the class of solvable quadratic irreducible Lie algebras with quadratic form of Witt index $1$ (only one hyperbolic plane) is just the class of one-dimensional-by-abelian double extension through skew-automorphisms of orthogonal subspaces without isotropic vectors (i.e., the class of $\mbK$-oscillator algebras according to Definition~\ref{def:one-by-abelian-extension}). The Witt index $1$ condition generalizes the $(p,1)$ or the $(1,q)$ signature condition in the real case. To achieve our result, we will use the concept of isomaximal ideal introduced in \cite[Definition 2.3]{Kath_Olbrich_2004} and some basic facts on quadratic Lie algebras.

\begin{definition}\label{def:isomaximal}
	Let $I$ be a totally isotropic ideal in a quadratic Lie algebra $(L, \varphi)$. The ideal $I$ is called isomaximal if it is not contained in any other totally isotropic ideal.
\end{definition}

\begin{lemma}[Kath, Olbrich, 2003]\label{lem:quadratic-patterns-KO}
	Let $(L, \varphi)$ be a quadratic indecomposable Lie algebra over a field of characteristic zero, and denote by $R(L)$ the solvable radical of $L$. Then:
	\begin{enumerate}[\quad a)]
		\item $L$ has no proper simple ideals and $R(L)^\perp\subseteq R(L)$.
		\item If $I$ is an isomaximal ideal, then $I\neq L$, $I^\perp\subseteq R(L)$, and the quotient Lie algebra $\displaystyle\frac{I^\perp}{I} $ is abelian.
	\end{enumerate}
\end{lemma}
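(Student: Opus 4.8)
The plan is to exploit the interplay between the Levi decomposition of $L$ and the orthogonality structure induced by $\varphi$. For item a), suppose $S$ is a proper simple ideal of $L$. The key observation is that the restriction $\varphi|_S$ is either zero or non-degenerate, since $S \cap S^\perp$ is an ideal of the simple algebra $S$. If $\varphi|_S$ were non-degenerate, then $S$ would be a regular ideal and $L = S \oplus S^\perp$ would contradict indecomposability; hence $S \subseteq S^\perp$, i.e.\ $S$ is totally isotropic. But a simple Lie algebra carries its non-degenerate Killing form, and invariance of $\varphi$ together with semisimplicity forces $\varphi|_S$ to be a nonzero multiple of the Killing form on $S$ (by Schur-type uniqueness of invariant forms on a simple algebra), contradicting total isotropy. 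This rules out proper simple ideals. For the inclusion $R(L)^\perp \subseteq R(L)$, I would argue via the quotient: $R(L)^\perp$ is an ideal, and $L/R(L)$ is semisimple, so its only ideals come from semisimple summands; the orthogonal complement of the radical projects into the semisimple part, and the absence of proper simple ideals (just proved) together with indecomposability forces $R(L)^\perp$ back inside $R(L)$.

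For item b), let $I$ be isomaximal. First, $I \neq L$ because $L$ itself cannot be totally isotropic when $\varphi$ is non-degenerate (unless $L=0$). To see $I^\perp \subseteq R(L)$, I would use that $I^\perp$ is an ideal containing $I$ (since $I$ is isotropic, $I \subseteq I^\perp$), and examine the image of $I^\perp$ in the semisimple quotient $L/R(L)$. The maximality of $I$ among totally isotropic ideals should prevent $I^\perp$ from surjecting onto any simple factor: if it did, one could enlarge $I$ using a totally isotropic ideal sitting inside that factor, contradicting isomaximality. Hence $I^\perp$ maps to zero in $L/R(L)$, giving $I^\perp \subseteq R(L)$.

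Finally, for the abelian quotient $I^\perp / I$, the plan is to show $[I^\perp, I^\perp] \subseteq I$. Invariance gives, for $x,y \in I^\perp$ and $z \in L$, the identity $\varphi([x,y],z) = -\varphi(y,[x,z])$; since $I$ is an ideal and $x \in I^\perp$, one checks $[x,z]$ pairs trivially against $I^\perp$ in the relevant range, forcing $[I^\perp, I^\perp] \perp L = I^{\perp\perp}$-type constraints that collapse to $[I^\perp,I^\perp] \subseteq (I^\perp)^\perp = I$. The main obstacle I anticipate is the careful bookkeeping in this last step: correctly using isomaximality to promote $[I^\perp,I^\perp]$ from ``some isotropic ideal'' to ``contained in $I$'' rather than merely ``contained in some larger isotropic ideal.'' The isomaximality hypothesis is exactly what closes this gap, since $I + [I^\perp,I^\perp]$ would otherwise be a strictly larger totally isotropic ideal.
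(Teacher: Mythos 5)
Your sketch has genuine gaps at three load-bearing points (note the paper itself does not reprove this lemma --- it simply cites Lemmas 2.2 and 2.3 of Kath--Olbrich --- so a self-contained argument is a fair goal, but yours does not close). First, the step ``invariance together with semisimplicity forces $\varphi|_S$ to be a \emph{nonzero} multiple of the Killing form'' is false: the zero form is invariant, so Schur-type uniqueness only yields $\varphi|_S\in\mbK\,\kappa_S$ and cannot exclude $\varphi|_S=0$, which is exactly the totally isotropic case you must kill. The correct mechanism is elementary: if $I$ is a totally isotropic ideal, then for $a\in I$, $b\in I^\perp$, $z\in L$, invariance gives $\varphi([a,b],z)=-\varphi(b,[a,z])\in\varphi(I^\perp,I)=0$, so $[I,I^\perp]=0$; since $I\subseteq I^\perp$, every totally isotropic ideal is abelian, and a simple ideal cannot be one. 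Second, your argument for $R(L)^\perp\subseteq R(L)$ is missing its bridge: a Levi factor of the ideal $R(L)^\perp$ is only a \emph{subalgebra} of $L$, so ``projecting into the semisimple part'' does not produce a simple \emph{ideal} of $L$ to contradict. What closes this is another invariance computation: for $u\in R(L)^\perp$, $r\in R(L)$, $\varphi([u,r],z)=-\varphi(r,[u,z])\in\varphi(R(L),R(L)^\perp)=0$, so $[R(L)^\perp,R(L)]=0$; hence the radical $R(L)^\perp\cap R(L)$ of the ideal $R(L)^\perp$ is central, $R(L)^\perp$ is reductive, and $[R(L)^\perp,R(L)^\perp]$ is an honest semisimple ideal of $L$ whose simple components are ideals of $L$ (derivations of a semisimple algebra are inner and preserve simple components), proper ones when $L$ is not simple --- contradicting the first claim. (Incidentally, $R(L)^\perp\subseteq R(L)$ fails outright for simple $L$, where $R(L)=0$; the lemma tacitly excludes that case, and the paper only invokes it for non-semisimple algebras.)

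In part b), your route to $I^\perp\subseteq R(L)$ --- enlarging $I$ by ``a totally isotropic ideal sitting inside a simple factor'' of $L/R(L)$ --- does not make sense: a simple factor contains no nonzero proper ideals at all, and ideals of the quotient do not pull back to isotropic ideals of $L$. The working argument recycles a): $I$ is abelian (shown above), hence $I\subseteq R(L)$; by a), $R(L)^\perp$ is a totally isotropic ideal; then $I+R(L)^\perp$ is a totally isotropic ideal (using $\varphi(I,R(L)^\perp)\subseteq\varphi(R(L),R(L)^\perp)=0$), so isomaximality gives $R(L)^\perp\subseteq I$ and therefore $I^\perp\subseteq R(L)$. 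Finally, your last step is circular at exactly the spot you flagged: in $\varphi([x,y],z)=-\varphi(y,[x,z])$ with $x,y,z\in I^\perp$ one only knows $[x,z]\in I^\perp$, which is no help, and the fallback that $I+[I^\perp,I^\perp]$ is totally isotropic requires $\varphi([I^\perp,I^\perp],[I^\perp,I^\perp])=0$, which is essentially the assertion $[I^\perp,I^\perp]\subseteq I$ being proved. Isomaximality must instead be applied to a different ideal: since $I^\perp\subseteq R(L)$, the quotient $\widetilde{L}=I^\perp/I$ is a \emph{solvable} quadratic Lie algebra under the induced form; if it were non-abelian, then $\widetilde{L}^2\cap Z(\widetilde{L})=\widetilde{L}^2\cap(\widetilde{L}^2)^\perp\neq 0$ (otherwise $\widetilde{L}^2$ would be nondegenerate, forcing $\widetilde{L}^2=[\widetilde{L}^2,\widetilde{L}^2]$ against solvability), and its full preimage $\bigl(I+[I^\perp,I^\perp]\bigr)\cap\{x\in I^\perp:[x,I^\perp]\subseteq I\}$ is an ideal of $L$ (a short Jacobi check using that $I$ and $I^\perp$ are ideals of $L$), totally isotropic, and strictly larger than $I$ --- the desired contradiction. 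So your instinct that isomaximality closes the argument is right, but it must be applied to this carefully chosen ideal, not to $I+[I^\perp,I^\perp]$.
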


\begin{proof}
	This is Lemma 2.2 and Lemma 2.3 in \cite{Kath_Olbrich_2004}. The proofs of both lemmas hold over fields of characteristic zero.
\end{proof}

\begin{theorem}\label{thm:Kosciladoras-Witt1}
	The following assertions are equivalent:
	\begin{enumerate}[\quad a)]
		\item $(L, \psi)$ is an non-semisimple indecomposable quadratic Lie algebra such that $\psi$ has Witt index $1$.
		\item $L=\mfd(V,\varphi, \delta)$ is a $\mbK$-oscillator algebra, and $\varphi(v,v)\neq 0$ for all $v\in V$.
	\end{enumerate}
	Lie algebras that satisfy one of the previous equivalent conditions are double extensions of an abelian quadratic algebra $(V,\varphi)$ by a $\varphi$-skew semisimple automorphism $\delta$. Even more, any irreducible polynomial of the factorization of the minimum polynomial $m_\delta(x)$ is of the form $\pi(x)=x^{2n}-a_{n-1}x^{2(n-1)}-\dots-a_1x^2-a_0\in \mbK[x]$, $n\geq 1$.
\end{theorem}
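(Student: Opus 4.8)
The plan is to prove both implications of the equivalence, treating (b)~$\Rightarrow$~(a) as the routine direction and (a)~$\Rightarrow$~(b) as the substantial one, and then to read off the two closing structural assertions from Corollary~\ref{cor:semisimplicidad}. For (b)~$\Rightarrow$~(a), suppose $L=\mfd(V,\varphi,\delta)$ with $\delta$ a $\varphi$-skew automorphism and $\varphi$ anisotropic on $V$. By Lemma~\ref{lem:general-oscillator-properties} the algebra is solvable, hence non-semisimple. The form $\varphi_\delta$ is the orthogonal sum of the hyperbolic plane $\spa\langle\delta,\delta^*\rangle$ and $(V,\varphi)$, and since the Witt index is additive over orthogonal sums while $\varphi$ has no isotropic vectors, $\varphi_\delta$ has Witt index $1+0=1$. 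Finally, Lemma~\ref{lem:local-general-oscillator} shows that $\delta$ being an automorphism makes $L$ local with unique minimal ideal $Z(L)=\mbK\cdot\delta^*$; a nontrivial splitting $L=I\oplus I^\perp$ would force both summands to contain this minimal ideal, contradicting $I\cap I^\perp=0$, so $L$ is indecomposable.

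For (a)~$\Rightarrow$~(b), I would first establish solvability. By Lemma~\ref{lem:quadratic-patterns-KO}~a), $R(L)^\perp\subseteq R(L)$, so $R(L)^\perp$ is totally isotropic and $\dim R(L)^\perp\leq 1$ by hypothesis; as $\dim R(L)^\perp=\dim(L/R(L))$, the quotient $L/R(L)$ is a semisimple algebra of dimension at most $1$, hence zero, so $L=R(L)$ is solvable. Next I analyse the centre. Since $L$ is indecomposable and non-abelian (an abelian indecomposable quadratic algebra is one-dimensional, incompatible with Witt index $1$), no central vector can be anisotropic, for such a vector would span a regular central ideal splitting $L$; by polarization $Z(L)$ is totally isotropic, whence $\dim Z(L)\leq 1$, and $Z(L)=(L^2)^\perp\neq 0$ (as $L^2\subsetneq L$) gives $Z(L)=\mbK\cdot z$ with $z$ isotropic and $z\in L^2$.

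The ideal $I=\mbK\cdot z$ is then isomaximal in the sense of Definition~\ref{def:isomaximal}, so Lemma~\ref{lem:quadratic-patterns-KO}~b) yields that $I^\perp/I$ is abelian. Choosing $d$ with $\psi(z,d)=1$ and $\psi(d,d)=0$, I obtain an orthogonal decomposition $L=\spa\langle d,z\rangle\perp V$ with $V=\spa\langle d,z\rangle^\perp\cong I^\perp/I$ an abelian quadratic space; consequently $[V,V]\subseteq\mbK\cdot z$ and $[d,x]=\delta(x)+\gamma(x)z$ for a $\varphi$-skew endomorphism $\delta$ of $(V,\varphi:=\psi|_V)$ and a linear form $\gamma$. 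The key simplification is that invariance of $\psi$, applied to $\psi([d,d],x)+\psi(d,[d,x])=0$, forces $\gamma\equiv 0$; together with $[x,y]=\varphi(\delta(x),y)z$ (again from invariance) this exhibits $L$ as the double extension $\mfd(V,\varphi,\delta)$ of Definition~\ref{def:one-by-abelian-extension}. Additivity of the Witt index applied to $\psi=\spa\langle d,z\rangle\perp V$ shows $\varphi$ is anisotropic, and since $\dim Z(L)=1$ while Lemma~\ref{lem:general-oscillator-properties}~b) computes $Z(L)=\ker\delta\oplus\mbK\cdot z$, we get $\ker\delta=0$, i.e. $\delta$ is an automorphism. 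The remaining claims—that $\delta$ is semisimple and that each irreducible factor of $m_\delta(x)$ has the stated even form—are exactly Corollary~\ref{cor:semisimplicidad}~a) for the anisotropic space $(V,\varphi)$.

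I expect the main obstacle to be the passage to the double extension in the third paragraph: one must check carefully that $V=\spa\langle d,z\rangle^\perp$ is a complement realizing $I^\perp/I$ and that invariance truly eliminates the cross term $\gamma$, since an unchecked $\gamma$ would spuriously shrink the apparent centre and obscure the automorphism conclusion. The solvability step is short but decisive, and it is precisely there that the Witt index hypothesis does its essential work through Lemma~\ref{lem:quadratic-patterns-KO}.
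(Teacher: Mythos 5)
Your proof is correct and follows the same overall architecture as the paper's: both directions pivot on Lemma~\ref{lem:quadratic-patterns-KO}, solvability comes from $R(L)^\perp\subseteq R(L)$ together with the dimension bound on a Levi factor, the centre is identified as a one-dimensional totally isotropic isomaximal ideal, anisotropy of $\varphi$ and the automorphism property of $\delta$ are extracted the same way, and the closing structural claims are read off Corollary~\ref{cor:semisimplicidad}. You differ in three local respects, all legitimate. First, where the paper simply cites \cite[Lemma 2.7]{Favre_Santharoubane_1987} to realize $L$ as a double extension of $(L^2/Z(L),\varphi)$, you build the extension by hand: the choice of $d$ with $\psi(z,d)=1$, $\psi(d,d)=0$, the identification $V=\spa\langle d,z\rangle^\perp\cong I^\perp/I$, and the computation $\gamma(x)=\psi(d,[d,x])=-\psi([d,d],x)=0$ killing the cross term (after which invariance forces $[x,y]=\varphi(\delta(x),y)z$). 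This makes the argument self-contained at the cost of a page of verification, and the cross-term worry you flag is precisely what the citation papers over. Second, you obtain total isotropy of $Z(L)$ directly (an anisotropic central vector spans a proper regular ideal, contradicting indecomposability; then polarize), whereas the paper routes this through Remark~\ref{rmk:decomp-indecomp} and the inclusion $Z(L)\subseteq L^2$; both work, and yours is the more elementary. Third, you spell out (b)$\Rightarrow$(a) in full, which the paper compresses into its final sentences. One caution on phrasing: ``the Witt index is additive over orthogonal sums'' is false as a general principle (over $\mathbb{Q}$, for instance, an orthogonal sum of two anisotropic forms can be isotropic). What you actually need and use is the Witt decomposition fact that splitting off a hyperbolic plane raises the index by exactly one, $i_W(H\perp q)=1+i_W(q)$; since in both of your applications one summand is the hyperbolic plane $\spa\langle\delta,\delta^*\rangle$ (resp.\ $\spa\langle d,z\rangle$) and the other is anisotropic, your conclusions stand, but the step should be justified by Witt decomposition rather than by additivity.
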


\begin{proof}
	Assume condition a) and note that the maximal dimension of any totally isotropic subspace is one. From Lemma~\ref{lem:quadratic-patterns-KO}, $R(L)^\perp\subseteq R(L)$, so  $R(L)^\perp$ is a totally isotropic ideal, and therefore $d=\dim R(L)^\perp\leq 1$. Since $d=\dim L-\dim R(L)$ is just the dimension of any Levi factor of $L$, $d\geq 3$ if $d\neq 0$. Thus, $d=0$ and $L=R(L)$ is a solvable Lie algebra. This implies $L^2\neq L$, and then $Z(L)=(L^2)^\perp$ is a nonzero ideal. From Remark~\ref{rmk:decomp-indecomp}, $0\neq Z(L)\subset L^2=Z(L)^\perp$ by indecomposibility; therefore, $Z(L)$ is a totally isotropic ideal. So, $Z(L)=\mbK\cdot z$ is a minimal  ideal, $\psi(z,z)=0$, and $(Z(L))^\perp=L^2$ is a maximal ideal of codimension one. Then $L=\mbK\cdot x \oplus L^2$, and from the non-degeneracy of $\psi$, we can assume without loss of generality $\varphi(x,z)=1$, $\varphi(x,x)=0$, i.e., $\langle x,z\rangle$ is a hyperbolic plane. From \cite[Lemma 2.7]{Favre_Santharoubane_1987}, we get that $L$ is isometrically isomorphic to the double extension of the quadratic algebra $(V=\frac{L^2}{Z(L)}, \varphi)$, $\varphi(x+Z(L), y+Z(L)):=\psi (x,y)$. But the centre is an isomaximal ideal; thus, $(V=\frac{L^2}{Z(L)}, \varphi)$ is a quadratic abelian Lie algebra following b) in Lemma~\ref{lem:quadratic-patterns-KO}. Since the Witt index of $\psi$ is one, from the decomposition $L=\langle x,z\rangle\oplus\langle x,z\rangle^\perp$ and $L^2=\mbK\cdot z\oplus \langle x,z\rangle^\perp$, it is easy to check that $\varphi$ has no isotropic vectors. Note that, as $L\cong \mfd(V,\varphi, \delta)$ and $Z(L)$ is one dimensional, $\delta$ is an automorphism according to Lemma~\ref{lem:local-general-oscillator}. Finally, as $L^2$ is the only maximal ideal of $L$, if $I$ is any proper ideal, $I^\perp \subseteq L^2$ and therefore $Z(L)\subseteq (I^\perp)^\perp=I$. This implies that $L$  is indecomposable. The final assertion on $\delta$ semisimple and irreducible factors of $m_\delta(x)$ follows from the Corollary~\ref{cor:semisimplicidad}.
\end{proof}

\begin{remark}
	The class of $\mbK$-oscillator algebras described in Theorem~\ref{thm:Kosciladoras-Witt1} is broad. According to Lemma~\ref{lem:local-general-oscillator}, the quadratic dimension of any algebra in this class is $2$. Moreover, Theorem~\ref{thm:isomorphism-isometric-iso} provides criteria for isomorphism (isometric isomorphism) between two algebras of this class. In the real case, the algebraic structure of oscillator algebras provides information on the geometry of oscillator groups \cite[Theorem 5.1]{Baum_Kath_2003}.
\end{remark}

\begin{remark}
	Any indecomposable quadratic Lie algebra with trivial solvable radical is simple. For any simple Lie algebra $S$, the Killing form $\kappa(x,y)=\Tr(\ad x \ad y)$ is invariant and, from Cartan's Criteria,  $\kappa$ is non-degenerate. For the special simple Lie algebra $\mfsl(n, \mbK)$ of zero-trace matrices, the bilinear form $b(A,B)=\frac{1}{2}\Tr AB$ allows the recovery of the Killing form as $\kappa=8b$. Over the reals, $(\mfsl(2, \mbR), \lambda \kappa)$ with $\lambda>0$ are the unique simple Lorentzian algebras. All of them are isometrically isomorphic to $(\mfsl(2, \mbR), \kappa)$.
\end{remark}

\begin{corollary}[Medina, 1985]\label{lorentzian-algebras}
	The $3$-dimensional special linear algebra is the unique quadratic non-solvable indecomposable real Lorentzian algebra, and the Killing form is the unique, up to positive scalars, invariant and non-degenerate Lorentzian form. The solvable indecomposable quadratic Lorentzian algebras are the $(2n+2)$-dimensional algebras $(\mfd_{2n+2}(\lambda),\varphi_\delta)$ described in Example~\ref{ex:real-oscillator} with $\mfd_{2n+2}(\lambda)=\spa_\mbR\langle \delta, x_i,y_i, \delta^*\rangle$ and $n\geq 1$, $\lambda = (1,\lambda_2, \dots, \lambda_n)$, where $0<\lambda_i\leq \lambda_{i+1}$.
\end{corollary}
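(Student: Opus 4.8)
The plan is to read the Lorentzian condition as the real incarnation of the Witt index $1$ hypothesis and then lean on the dichotomy already established in this section. Over $\mbR$ a non-degenerate symmetric form is Lorentzian precisely when its signature is (up to a global sign) of the form $(p,1)$, and such a form has Witt index exactly $1$ as soon as $\dim L\geq 2$. Hence every indecomposable quadratic real Lorentzian algebra $(L,\psi)$ satisfies the hypotheses used throughout Section~\ref{s:isolorentz}. First I would split according to the solvable radical $R=R(L)$. If $0\neq R\neq L$, then $L$ is non-semisimple and indecomposable, so Lemma~\ref{lem:quadratic-patterns-KO} gives $R^\perp\subseteq R$; thus $R^\perp$ is a totally isotropic ideal and $\dim R^\perp\leq 1$. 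But $\dim R^\perp=\dim L-\dim R$ equals the dimension of a Levi factor, which is $\geq 3$ whenever it is nonzero, a contradiction. Therefore either $R=L$ (the solvable case) or $R=0$ (the semisimple case), and no genuinely mixed algebra occurs.

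In the non-solvable case the previous step forces $R=0$, so $L$ is semisimple; being indecomposable quadratic, it is simple. Over $\mbR$ the invariant symmetric forms on a simple Lie algebra are the scalar multiples of the Killing form $\kappa$, and $c\kappa$ is Lorentzian exactly when $\kappa$ has, up to sign, a Lorentzian signature and the sign of $c$ is chosen accordingly. As recalled in the remark above, the only real simple Lie algebra whose Killing form is Lorentzian is $\mfsl(2,\mbR)$ (dimension $3$, with $\kappa$ of signature $(2,1)$, so $c>0$); this gives the first assertion, including the uniqueness of the Lorentzian form up to positive scalars.

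For the solvable case, $L=R(L)$ is a nonzero solvable indecomposable quadratic algebra of Witt index $1$, in particular non-semisimple, so Theorem~\ref{thm:Kosciladoras-Witt1} applies: $L$ is isometrically isomorphic to $\mfd(V,\varphi,\delta)$ for a $\varphi$-skew automorphism $\delta$ with $\varphi$ anisotropic on $V$. The key signature bookkeeping is that $\spa_\mbR\langle\delta,\delta^*\rangle$ is a hyperbolic plane orthogonal to $V$, so the signature of $\varphi_\delta$ is that of $\varphi$ plus $(1,1)$; since $\varphi_\delta$ is Lorentzian, $\varphi$ must be positive definite, i.e. $(V,\varphi)$ is Euclidean. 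The real Spectral Theorem in the form of Example~\ref{ex:teorema-espectral-real} then yields an orthonormal basis $\{x_1,\dots,x_n,y_1,\dots,y_n\}$ of $V$ with $\delta(x_i)=-\lambda_i y_i$, $\delta(y_i)=\lambda_i x_i$ and $0<\lambda_1\leq\cdots\leq\lambda_n$ (no zero eigenvalue, as $\delta$ is bijective, so $\dim V=2n$), identifying $L$ with the oscillator algebra $\mfd_{2n+2}(\lambda)$ of Example~\ref{ex:real-oscillator}.

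Finally I would normalise $\lambda_1=1$. Replacing $\delta$ by $\delta/\lambda_1$ keeps $(V,\varphi)$ Euclidean and multiplies the tuple by $1/\lambda_1$; taking $f=\id$ and the two scalars of Theorem~\ref{thm:isomorphism-isometric-iso} equal to $\lambda_1$ and $\lambda_1^{-1}$ satisfies conditions a) and b) with product $1$, so in fact $\mfd_{2n+2}(\lambda)$ is isometrically isomorphic to $\mfd_{2n+2}\bigl((1,\lambda_2/\lambda_1,\dots,\lambda_n/\lambda_1)\bigr)$. This places every solvable indecomposable Lorentzian algebra in the stated list $\lambda=(1,\lambda_2,\dots,\lambda_n)$ with $0<\lambda_i\leq\lambda_{i+1}$. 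I expect the main obstacle to be the signature bookkeeping that pins $(V,\varphi)$ down as Euclidean together with the careful use of Theorem~\ref{thm:isomorphism-isometric-iso} to justify the normalisation; the exhaustiveness of the list is already guaranteed by Theorem~\ref{thm:Kosciladoras-Witt1}, so no further structural work is needed beyond these real-specific reductions.
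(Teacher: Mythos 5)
Your proposal is correct and takes essentially the same route as the paper: its proof is a one-line appeal to Theorem~\ref{thm:Kosciladoras-Witt1} (whose own proof already contains your radical dichotomy $R=0$ or $R=L$), the real Spectral Theorem of Example~\ref{ex:teorema-espectral-real}, the rescaling $\lambda\mapsto\frac{1}{\lambda_1}\lambda$ via Theorem~\ref{thm:isomorphism-isometric-iso}, and the preceding remark on $\mfsl(2,\mbR)$ for the non-solvable case. You simply make explicit the signature bookkeeping (Lorentzian $\Rightarrow$ Witt index $1$, and $(V,\varphi)$ Euclidean) that the paper leaves implicit.
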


\begin{proof}
	For the solvable case, apply Theorem~\ref{thm:isomorphism-isometric-iso} and Theorem~\ref{thm:Kosciladoras-Witt1} to prove that any $\mbR$-oscillator determined by $\lambda=(\lambda_1, \lambda_2, \dots, \lambda_n)$ is isometrically isomorphic to that described through $\displaystyle\frac{1}{\lambda_1}\lambda$.
\end{proof}

\begin{corollary}
	The invariant and non-degenerate bilinear forms of the Lorentzian algebra $(\mfd_{2n+2}(\lambda),\varphi_\delta)$ where $\lambda = (1,\lambda_2, \dots, \lambda_n)$ and $0<\lambda_i\leq \lambda_{i+1}$ are $\varphi_{t,s}$ with $(t,s)\in \mbR^2$ and $s\neq 0$ defined by:
	\[\varphi_{t,s}(\delta,\delta)=t,\; \varphi_{t,s}(\delta,\delta^*)=s,\; \varphi_{t,s}(x_i,x_j)=\varphi_{t,s}(y_i,y_j)=\delta_{ij}s,\]
	and $\varphi_{t,s}(\delta,a)=\varphi_{t,s}(\delta^*,a)=\varphi_{t,s}(\delta,a)=0$ for $a=x_i,y_j$. Then, $\varphi_\delta=\varphi_{0,1}$. Furthermore, $(\mfd_{2n+2}(\lambda), \varphi_{t,s})$ is isometrically isomorphic to $(\mfd_{2n+2}(\lambda'), \varphi_{t',s'})$ if and only if $\lambda=\lambda'$ and $ss'>0$. In particular, any $\mbR$-oscillator algebra admits two non-isometrically isomorphic quadratic structures: the one given by the invariant form $\varphi_{0,1}$  of signature $(2n-1,1)$ and the one given by the invariant form  $\varphi_{0,-1}=-\varphi_{0,1}$ of signature $(1,2n-1)$.
\end{corollary}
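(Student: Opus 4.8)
The plan is to pin down $B^s_{inv}$ first and then reduce the isometric classification to the standard case already settled by Theorem~\ref{thm:isomorphism-isometric-iso}. For the first assertion, note that since $\delta$ is an automorphism, Lemma~\ref{lem:local-general-oscillator} gives that $\mfd_{2n+2}(\lambda)$ is local with quadratic dimension $2$ and $B^s_{inv}=\spa\langle\varphi_{1,0},\varphi_\delta\rangle$. Writing an arbitrary invariant form as $\varphi_{t,s}=t\,\varphi_{1,0}+s\,\varphi_\delta$ and evaluating on the basis $\{\delta,x_i,y_i,\delta^*\}$ reproduces the displayed table: indeed $\varphi_\delta(x_i,x_j)=\varphi(x_i,x_j)=\delta_{ij}$ because the $x_i,y_i$ are $\varphi$-orthonormal, while $\varphi_{1,0}$ is supported on the $(\delta,\delta)$ slot since $A^2=V\oplus\mbK\cdot\delta^*$. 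In particular $\varphi_\delta=\varphi_{0,1}$. Non-degeneracy holds exactly when $s\neq0$: the plane $\spa\langle\delta,\delta^*\rangle$ is $\varphi_{t,s}$-orthogonal to $V$ and carries the Gram matrix $\left(\begin{smallmatrix} t&s\\ s&0\end{smallmatrix}\right)$ of determinant $-s^2$, while $\varphi_{t,s}|_V=s\,\varphi$.

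For the classification I would treat the two conditions separately. An isometric isomorphism is in particular a Lie isomorphism, so condition a) of Theorem~\ref{thm:isomorphism-isometric-iso} provides an $f$ and a scalar $\mu\neq0$ with $\delta_1=\mu f^{-1}\delta_2 f$; as the eigenvalues of the structural map of $\mfd_{2n+2}(\lambda)$ are $\pm i\lambda_k$, this conjugacy forces the multisets $\{\lambda_k\}=\{|\mu|\,\lambda'_k\}$ to agree, and the normalization $\lambda_1=\lambda'_1=1$ yields $|\mu|=1$, hence the equality of the tuples $\lambda=\lambda'$. The sign condition $ss'>0$ comes from Sylvester's law of inertia: an isometric isomorphism is a linear isometry, so $\varphi_{t,s}$ and $\varphi_{t',s'}$ must have the same signature. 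By the first paragraph the plane $\spa\langle\delta,\delta^*\rangle$ always contributes $(1,1)$ (independently of $t$, since its Gram determinant is negative), whereas $V$ contributes $(2n,0)$ if $s>0$ and $(0,2n)$ if $s<0$; thus the signature is $(2n+1,1)$ for $s>0$ and $(1,2n+1)$ for $s<0$, and equality forces $ss'>0$.

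For sufficiency I would normalize $(t,s)$ inside a single algebra by two explicit automorphisms of $\mfd_{2n+2}(\lambda)$. The map fixing $V$ and $\delta^*$ and sending $\delta\mapsto\delta+\tfrac{t}{2s}\delta^*$ satisfies $\varphi_{0,s}(F\cdot,F\cdot)=\varphi_{t,s}$, reducing to $t=0$; the map $\delta\mapsto\delta$, $x\mapsto\rho x$ on $V$, $\delta^*\mapsto\rho^2\delta^*$ satisfies $\varphi_{0,s}(G\cdot,G\cdot)=\varphi_{0,\rho^2 s}$, and taking $\rho=|s|^{-1/2}$ reduces $s$ to $\operatorname{sign}(s)$. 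Both maps respect the bracket by a direct check. Hence $\varphi_{t,s}$ is isometric to $\varphi_{0,1}=\varphi_\delta$ when $s>0$ and to $\varphi_{0,-1}=-\varphi_\delta$ when $s<0$. If $\lambda=\lambda'$ and $ss'>0$, both algebras reduce to the same standard model $(\mfd_{2n+2}(\lambda),\pm\varphi_\delta)$; an isometry of the $\varphi_\delta$-models exists precisely because $\lambda=\lambda'$ (Theorem~\ref{thm:isomorphism-isometric-iso} with the scalars satisfying $\lambda\mu=1$), and the very same map also intertwines $-\varphi_\delta$, closing the equivalence. The two inequivalent structures are then $\varphi_{0,1}$ of signature $(2n+1,1)$ and $\varphi_{0,-1}=-\varphi_{0,1}$ of signature $(1,2n+1)$.

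The step I expect to be most delicate is this sufficiency/normalization: one must exhibit honest Lie-algebra automorphisms implementing $(t,s)\mapsto(0,s)\mapsto(0,\operatorname{sign}(s))$ and verify that they are isometries between the intended pair of forms, and then route the negative-definite case $s<0$ through the standard classification, which Theorem~\ref{thm:isomorphism-isometric-iso} states verbatim only for $+\varphi_\delta$. The observation that an isometry intertwining $\varphi_\delta$ automatically intertwines $-\varphi_\delta$ is what bridges the gap.
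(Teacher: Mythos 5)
Your proposal is correct and follows essentially the same route as the paper: both obtain the pencil $\varphi_{t,s}=t\,\varphi_{1,0}+s\,\varphi_\delta$ from the quadratic dimension $2$ (Lemma~\ref{lem:local-general-oscillator}), derive $\lambda=\lambda'$ from the conjugacy $\delta_1=\mu f^{-1}\delta_2 f$ supplied by Theorem~\ref{thm:isomorphism-isometric-iso} by comparing spectra, and prove sufficiency with an explicit map --- your shear-then-rescale composition $\delta\mapsto\delta+\tfrac{t}{2s}\delta^*$ followed by $x\mapsto\rho x$, $\delta^*\mapsto\rho^2\delta^*$ is exactly the paper's single map $F(\delta)=\delta+\nu\delta^*$, $F(x)=\sqrt{s/s'}\,x$, $F(\delta^*)=(s/s')\,\delta^*$. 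The only cosmetic difference is the necessity of $ss'>0$: you invoke Sylvester's law of inertia via the signature split $\spa\langle\delta,\delta^*\rangle\perp V$, while the paper computes $s/s'=\sum_k a_k^2+b_k^2>0$ directly from the isometry condition --- the same positive-definiteness of $\varphi|_V$ in disguise (and your signature $(2n+1,1)$ silently corrects the typo $(2n-1,1)$ in the statement).
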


\begin{proof}
	The bilinear forms come from the fact that the quadratic dimension of these algebras is $2$. Now,
	for the isomorphism assertion, let $\mfd_{2n+2}\left( \lambda \right) =\spa_{\mbR}\langle \delta, x_{i},
		y_{i}, \delta^*\rangle$ and $\mfd_{2n+2}\left( \lambda' \right) =\spa_{\mbR}\langle\delta',x_{i}',
		y_{i}',\delta'^*\rangle$. By Theorem \ref{thm:isomorphism-isometric-iso} for these algebras to
	be isomorphic there must be $0\neq\mu\in\mbR$ and an isomorphism $f\colon\spa_{\mbR}\langle
		x_{i},y_{i}\rangle\to\spa_{\mbR} \langle x_{i}',y_{i}'\rangle$ such that $\delta=\mu f^{-1}\delta' f$.
	The characteristic polynomial for the map on the rigth side is equal to
	$q(x)=\left( x^{2}+\mu^{2} \right) \left( x^{2}+\left( \mu\lambda_{2}' \right)^{2}  \right)\cdots
		\left( x^{2}+\left( \mu\lambda_{n}' \right) ^{2} \right) $ with $\mu\leq \mu\lambda_{2}'\leq\cdots
		\leq\mu\lambda_{n}'$. The characteristic polynomial for $\delta$ is $p(x)=\left( x^{2}+1 \right)
		\left( x^{2}+\lambda_{2}^{2} \right) \cdots\left( x^{2}+\lambda_{n}^{2} \right) $ with $1\leq\lambda_{2}
		\leq\cdots\leq\lambda_{n}$. Since they must be equal, $1=\mu$, $\lambda_{i}=\mu\lambda_{i}'$, this
	happen if and only if $1=\mu$ and $\lambda_{i}=\lambda_{i}'$, that is, $\lambda=\lambda'$.
	\[
		\varphi_{t,s}\left( \delta,\delta^* \right) =s=\lambda\mu s'=\varphi_{t',s'}\left( F\left( \delta \right)
		,F\left( \delta^* \right) \right)
		.\]
	And thus
	\[
		\frac{s}{s'}\varphi\left( x,y \right) =\varphi\left( f\left( x \right) ,f\left( y \right)  \right) ,\quad
		x,y\in\mfd_{2n+2}\left( \lambda \right)
		.\]
	For the isometric part, assume that $F\colon\left(\mfd_{2n+2}\left( \lambda \right),\varphi_{t,s}\right)
		\to\left(\mfd_{2n+1}\left( \lambda\right),\varphi_{t',s'}\right) $ is an isometric isomorphism where
	$F$ is as in Theorem \ref{thm:isomorphism-isometric-iso}. Write
	$f(x_{1}) =\sum_{k=1} ^{n}a_{k}x_{k}+b_{k}x_{k}$ some $a_{k}$ or $b_{k}$ is not zero since $f$ is an
	isomorphism. Then
	\[
		\frac{s}{s'}=\frac{s}{s'}\varphi\left( x_{1},x_{1} \right) =\varphi\left( f\left( x_{1} \right) ,
		f\left( x_{1} \right) \right) =\sum_{k=1}^{n}a_{k}^{2}+b_{k}^{2}>0
		.\]
	Or, equivalently, $ss'>0$.

	Conversely, let $s=\pm 1$, $t=0$ and $s'$ such $ss'>0$. Define the map $F\colon \left( \mfd_{2n+1}
		\left( \lambda \right) ,\varphi_{0,s}\right) \to\left( \mfd_{2n+1}\left( \lambda \right) ,\varphi_{t',s'}
		\right) $ by
	\[
		F(\delta)=\delta+\nu\delta^*,\quad F(x)= \sqrt{\frac{s}{s'}} x,\quad F\left( \delta^* \right) =\frac{s}{s'}\delta^*,
	\]
	where $\nu$ satisfies $t'+2\nu s'=0$. By Theorem \ref{thm:isomorphism-isometric-iso} this map is an
	isomorphism and straightforward computations check that it is also an isometry. So $(\mfd_{2n+2}(\lambda), \varphi_{t',s'>0})\cong (\mfd_{2n+2}(\lambda), \varphi_{0,1 })$ and $(\mfd_{2n+2}(\lambda), \varphi_{t',s'<0})\cong (\mfd_{2n+2}(\lambda), \varphi_{0,-1})$.
\end{proof}


\section*{Acknowledgements}
This research has been partially funded by grant Fortalece 2023/03 of ``Comunidad Autónoma de La Rioja'' and by grant MTM2017-83506-C2-1-P of ``Ministerio de Economía, Industria y Competitividad, Gobierno de España'' (Spain) from 2017 to 2022 and by grant PID2021-123461NB-C21, funded by MCIN/AEI/10.13039/501100011033 and by ``ERDF: A way of making Europe'' starting from 2023. The third author has been also supported until 2022 by a predoctoral research contract FPI-2018 of ``Universidad de La Rioja''.


\bibliographystyle{apalike}
\bibliography{bibliography.bib}
\vfill

\end{document}